 \newcommand{\bi}{\begin{itemize}}
\newcommand{\ei}{\end{itemize}}
\newcommand{\bal}{\begin{align}}
\newcommand{\eal}{\end{align}}
\newtheorem{theorem}{\textbf{Theorem}}
\newtheorem{lemma}{\textbf{Lemma}}
\newtheorem{example}{\textbf{Example}}
\newtheorem{definition}{\textbf{Definition}}
\newtheorem{remark}{\textbf{Remark}}
\newtheorem{proposition}{\textbf{Proposition}}
\newtheorem{assumption}{\textbf{Assumption}}
\begin{document}

\title{\huge{Convergence Rate of Distributed ADMM over Networks}}
\author{
\IEEEauthorblockN{Ali Makhdoumi and Asuman Ozdaglar}
\IEEEauthorblockA{
MIT,
Cambridge, MA 02139\\
Emails: makhdoum@mit.edu, asuman@mit.edu }
}
\maketitle
\begin{abstract}
We propose a distributed algorithm based on Alternating Direction Method of Multipliers (ADMM) to minimize the sum of locally known convex functions using communication over a network. This optimization problem emerges in many applications in distributed machine learning and statistical estimation. We show that when functions are convex, both the objective function values and the feasibility violation converge with rate $O(\frac{1}{T})$, where $T$ is the number of iterations. We then  show that if the functions are strongly convex and have Lipschitz continuous gradients, the sequence generated by our algorithm converges linearly to the optimal solution. In particular, an $\epsilon$-optimal solution can be computed with $O(\sqrt{\kappa_f} \log (1/\epsilon))$ iterations, where $\kappa_f$ is the condition number of the problem. Our analysis also highlights the effect of network structure on the convergence rate through maximum and minimum degree of nodes as well as the algebraic connectivity of the network.
\end{abstract}


\section{Introduction}
\subsection{Motivation}
Many of today's optimization problems in data science (including statistics, machine learning, and data mining) include an abundance of data, which cannot be handled by a single processor alone. This necessitates distributing data among multiple processors and processing it in a decentralized manner based on the available local information. The applications in machine learning  \cite{xiao2007distributed, dekel2012optimal, recht2011hogwild, agarwal2011distributed, duchi2014optimality} along with other applications in distributed data processing where information is inherently distributed among many processors (see e.g. distributed sensor networks \cite{duarte2004vehicle, rabbat2004distributed}, coordination and flow control problems \cite{low1999optimization, jadbabaie2003coordination}) have spearheaded a large literature on distributed multiagent optimization.

In this paper, we focus on the following optimization problem:
\begin{align}\label{eq:itrodproblemformula}
& \min_{x \in \mathbb{R}^d} \sum_{i=1}^n f_i(x) ,
\end{align}
where $f_i: \mathbb{R}^d \to \mathbb{R}$ is a convex function. We assume $f_i$ is known only to agent $i$ and refer to it as a local objective function.\footnote{We use the terms machine, agent, and node interchangeably.} Agents can communicate over a given network and their goal is to collectively solve this optimization. A prominent example where this general formulation emerges is  \emph{Empirical Risk Minimization} (EMR). Suppose that we have $M$ data points $\{(x_i,y_i)\}_{i=1}^M$, where $x_i \in \mathbb{R}^d$ is a feature vector and $y_i \in \mathbb{R}$ is a target output. The empirical risk minimization is then given by 
 \begin{align}\label{eq:EMR}
\min_{\theta \in \mathbb{R}^d} \frac{1}{M}\sum_{i=1}^M L(y_i, x_i, \theta)  + p(\theta),
\end{align}
for some convex loss function $L: \mathbb{R} \times \mathbb{R}^d \times \mathbb{R}^d \to \mathbb{R}$ and some convex penalty function $p: \mathbb{R}^d \to \mathbb{R}$. This general formulation captures many statistical scenarios including:
\begin{itemize}
\item Least-Absolute Shrinkage and Selection Operator (LASSO): \[\min_{\theta \in \mathbb{R}^d} \frac{1}{M} \sum_{i=1}^M (y_i - \theta' x_i)^2 + \tau ||\theta||_1.\]
\item Support Vector Machine (SVM) (\cite{cortes1995support}): \[\min_{\theta \in \mathbb{R}^d} \frac{1}{M} \sum_{i=1}^M \max\{0, 1- y_i (\theta' x_i)\}  + \tau ||\theta||_2^2.\]
\end{itemize}
Suppose our distributed computing system consists of $n$ machines each with $k= M/n$ data points (without loss of generality suppose $M$ is divisible by $n$, otherwise one of the machines have the remainder of data points). For all $i=1, \dots, n$, we define a function based on the available data to machine $i$ as  \[f_i(\theta)= \frac{1}{k} \sum_{1+(i-1)k}^{ik} L(y_i, x_i, \theta)  + p(\theta).\] 
Therefore, the empirical risk minimization  \eqref{eq:EMR} can be written as $\min_{\theta \in \mathbb{R}^d} \frac{1}{n}\sum_{i=1}^n f_i(\theta)$, 
where function $f_i(\theta)$ is only available to machine $i$, which is an instance of the formulation \eqref{eq:itrodproblemformula}. Data is distributed across different machines either because it is collected by decentralized agents \cite{zhang2013information, zhang2013divide, mitliagkas2013memory} or because memory constraints prevent it from being stored in a single machine\cite{duchi2014optimality, zhang2012communication, zhang2015communication, hellman1970learning}. The decentralized nature of data together with communication constraints necessitate distributed processing which has motivated a large literature in optimization and statistical learning on distributed algorithms (see e.g. \cite{forero2010consensus, li2014communication, wang2013large, aslan2013convex, romera2013new}).

\subsection{Related Works and Contributions}

Much of this literature builds on the seminal works \cite{tsitsiklis1984problems, tsitsiklis1986distributed}, which proposed gradient methods that can parallelize computations across multiple processors. 
A number of recent papers proposed subgradient type methods \cite{nedic2010constrained, ram2010distributed, chen2012diffusion, jakovetic2014fast, shi2014extra, ram2010distributed, johansson2009randomized} or a dual averaging method \cite{duchi2012dual} to design distributed optimization algorithms.

An alternative approach is to use Alternating Direction Method of Multipliers (ADMM) type methods which for separable problems leads to decoupled computations (see e.g. \cite{boyd2011distributed} and \cite{ eckstein2012augmented} for comprehensive tutorials on ADMM). ADMM has been studied extensively in the 80's \cite{glowinski1975approximation, gabay1976dual, bertsekas1988dual}. More
recently, it has found applications in a variety of distributed settings in machine learning such as model fitting, resource allocation, and classification (see e.g.  \cite{wahlberg2012admm, sedghi2014multi, wang2012online, zhang2012efficient, zhang2014asynchronous, mota2011basis, schizas2008consensus, aybat2014alternating, aybat2014admm}).

In this paper we present a new distributed ADMM algorithm for solving problem \eqref{eq:itrodproblemformula} over a network. Our algorithm relies on a novel node-based reformulation of \eqref{eq:itrodproblemformula} and leads to an ADMM algorithm that uses dual variables with dimension given by the number of nodes in the network. This results in a significant reduction in the number of variables stored and communicated with respect to edge-based ADMM algorithm presented in the literature (see \cite{ShiYin2014LinADMM, ermin}). Our main contribution is a unified convergence rate analysis for this algorithm that applies to both the case when the local objective functions are convex and also the case when the local objective functions are strongly convex with Lipschitz continuous gradients. In particular, our analysis shows that when the local objective functions are convex (with no further assumptions), then the objective function at the ergodic average of the estimates generated by our algorithm converges with rate $O(\frac{1}{T})$. Moreover, when the local objective functions are strongly convex with Lipschitz continuous gradients we show that the iterates converges linearly, i.e., the iterates converge to an $\epsilon$-neighborhood of the optimal solution after $O(\sqrt{\kappa_f} \log(\frac{1}{\epsilon}))$ steps, where $\kappa_f$ is the condition number defined as $L/\nu$, where $L$ is the maximum Lipschitz gradient parameter and $\nu$ is the minimum strong convexity constant of the local objective functions. This matches the best known iteration complexity and condition number dependence for the centralized ADMM (see e.g. \cite{deng2012global}). Our convergence rate estimates also highlight a novel dependence on the network structure as well as the communication weights. In particular, for communication weights that are governed by the Laplacian of the graph, we establish a novel iteration complexity $O\left(\sqrt{\kappa_f} \sqrt{ \frac{d^4_{\text{max}}}{d_{\text{min}}a^2(G) } } \log \left( \frac{1}{\epsilon} \right) \right)$, where $d_{\text{min}}$ is the minimum degree, $d_{\text{max}}$ is the maximum degree, and $a(G)$ is the algebraic connectivity of the network. Finally, we illustrate the performance of our algorithm with numerical examples. 

Our paper is most closely related to \cite{ShiYin2014LinADMM, ermin}, which studied edge-based ADMM algorithms for solving \eqref{eq:itrodproblemformula}. In \cite{ermin}, the authors consider convex local objective functions and provide an $O(\frac{1}{T})$ convergence rate. The more recent paper \cite{ShiYin2014LinADMM} assumes strongly convex local objective functions with Lipschitz gradients and show a linear convergence rate through a completely different analysis. This analysis does not extend to the node-based ADMM algorithm under these assumptions. In contrast, our paper provides a unified convergence rate analysis for both cases for the node-based distributed ADMM algorithm. Our paper is also related to \cite{he20121, deng2012global} that study the basic centralized ADMM where the goal is to miminize sum of two functions with a linearly coupled constraint. Our work is also related to the literature on the converge of operator splitting schemes, such as Douglas-Rachford splitting and relaxed Peaceman-Rachford
\cite{lions1979splitting, eckstein1998operator, patrinos2014douglas, patrinos2014forward, davis2014convergence, davis2014convergence2, giselsson2014diagonal, nishihara2015general, lessard2014analysis}.

\subsection{Outline}
The organization of paper is as follows. In Section \ref{sec:framework} we give the problem formulation and propose a novel distributed ADMM algorithm. In Section \ref{sec:preliminary}, we show some preliminary results that helps us to show the main results. In Section \ref{sec:sublinearrate} we show the sub-linear convergence rate. In Section \ref{sec:linearrate} we show the linear convergence rate of our algorithm. Finally, in Section \ref{sec:numericalresults} we show the effect of network on the convergence rate and provide numerical results that illustrate the performance of our algorithm, which leads to concluding remarks in Section \ref{sec:conclusion}.  All the omitted proofs are presented in the appendix.

\section{Framework}\label{sec:framework}
\subsection{Problem Formulation}\label{sec:problemformulation}
Consider a network represented by a connected graph $G=(V, E)$ where $V=\{1, \dots, n\}$ is the set of agents and $E$ is the set of edges. For any $ i $, let $N(i)$ denote its set of neighbors including agent $i$ itself, i.e., $N(i)=\{j ~ | ~ (i,j) \in E\} \cup \{i\}$, and let $d_i$ denote the degree of agent $i$, i.e., $|N(i)|=d_i+1$. We let $d_{\text{max}}= \max_{i \in V} d_i$ and $d_{\text{min}}= \min_{i \in V} d_i$.

The goal of the agents is to collectively solve optimization problem \eqref{eq:itrodproblemformula}, 
where $f_i$ is a function known only to agent $i$. In order to solve optimization problem \eqref{eq:itrodproblemformula}, we introduce a variable $x_i$ for each $i$ and write the objective function of problem \eqref{eq:itrodproblemformula} as $\sum_{i=1}^n f_i(x_i)$, so that the objective function is decoupled across the agents. The constraint that all the $x_i$'s are equal can be imposed using the following matrix.

\begin{definition}[\textbf{Communication Matrix}]\label{def:Amatrix}
\textup{
Let $P$ be a $n \times n$ matrix whose entries satisfy the following property:
\\ For any $ i= 1, \dots,  n$, $P_{ij}=0$ for $j \notin N(i)$. We refer to $P$ as the {\it communication matrix}. 
}
\end{definition}

\begin{assumption}\label{assump:matrixA}
\textup{ The communication matrix $P$ satisfies $\text{null}(P)=\text{span}\{\mathbf{1}\}$, where $\mathbf{1}$ is a $n \times 1$ vector with all entries equal to one and $\text{null}(P)$ denotes the null-space of the matrix $P$. 
}
\end{assumption}
\begin{example}
\textup{If $P_{ij}<0$ for all $j \in N(i)\setminus \{i\}$, summation of each row of $P$ is zero, and the graph is connected, then Assumption \ref{assump:matrixA} holds. As a particular case, the Laplacian matrix of the graph given by $P_{ij}= -1$ when $j \in N(i)\setminus \{i\}$ and zero otherwise, and $P_{ii}=d_i$ is a communication matrix that satisfies Assumption \ref{assump:matrixA}.
}
\end{example}
 We next show that the constraint that all $x_i$'s are equal can be enforced by the linear constraint $A \mathbf{x}=0$, where $\mathbf{x}=(x_1, \dots, x_n)$ where each $x_i$ is a sub-vector of dimension $d$ and $A$ is a $dn \times dn$ matrix defined as the Kronecker product between communication matrix $P$ and $I_d$, i.e., $A= P \otimes  I_d$. 

\begin{lemma}\label{lem:constraint}
Under Assumption \ref{assump:matrixA}, the constraint $A \mathbf{x}=0$ guarantees that $x_i=x_j$ for all $i,j \in V$.
\end{lemma}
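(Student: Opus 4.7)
The plan is to exploit the block structure induced by the Kronecker product $A = P \otimes I_d$ and reduce the claim to a coordinate-wise application of Assumption \ref{assump:matrixA}.

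First I would unpack the product $A\mathbf{x}$ in block form. Writing $\mathbf{x} = (x_1,\dots,x_n)$ with each $x_i \in \mathbb{R}^d$, the definition of the Kronecker product gives that the $i$-th block of $A\mathbf{x}$ is $\sum_{j=1}^{n} P_{ij} x_j \in \mathbb{R}^d$. Hence the constraint $A\mathbf{x} = 0$ is equivalent to the $n$ vector equations $\sum_{j=1}^n P_{ij} x_j = 0$ for $i=1,\dots,n$.

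Next I would project onto coordinates. Fix an arbitrary coordinate index $k \in \{1,\dots,d\}$ and let $x_j^{(k)}$ denote the $k$-th component of $x_j$. Reading off the $k$-th coordinate of each of the above vector equations yields $\sum_{j=1}^n P_{ij} x_j^{(k)} = 0$ for all $i$, which means the vector $v^{(k)} := (x_1^{(k)}, \dots, x_n^{(k)}) \in \mathbb{R}^n$ lies in $\text{null}(P)$. By Assumption \ref{assump:matrixA}, $\text{null}(P) = \text{span}\{\mathbf{1}\}$, so $v^{(k)}$ is a scalar multiple of the all-ones vector, i.e.\ $x_1^{(k)} = x_2^{(k)} = \dots = x_n^{(k)}$. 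Since $k$ was arbitrary, this gives $x_i = x_j$ for all $i,j \in V$.

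No real obstacle is expected here; the content of the lemma is essentially that $\text{null}(P \otimes I_d) = \text{null}(P) \otimes \mathbb{R}^d$, and once this Kronecker identity is made explicit the result is immediate from the hypothesis. The only care needed is to keep the block/coordinate bookkeeping consistent so that the single-column assumption $\text{null}(P) = \text{span}\{\mathbf{1}\}$ is applied to each of the $d$ coordinates separately rather than to $\mathbf{x}$ as a whole.
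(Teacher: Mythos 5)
Your proof is correct and follows essentially the same route as the paper: both arguments fix a coordinate $k$, observe that the vector of $k$-th components $(x_1^{(k)},\dots,x_n^{(k)})$ lies in $\text{null}(P)$ because $A = P \otimes I_d$, and then invoke Assumption \ref{assump:matrixA} to conclude all entries are equal. The extra detail you give on unpacking the Kronecker block structure is fine but adds nothing beyond the paper's argument.
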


Using Lemma 1, under Assumption \ref{assump:matrixA}, we can reformulate optimization problem \eqref{eq:itrodproblemformula} as 
\begin{align}\label{eq:optlinearconstraint}
& \min_{\textbf{x} \in {\mathbb{R}}^{nd}} F(\mathbf{x}) \\
& \text{ s.t. } A \mathbf{x} =0, \nonumber
\end{align}
where  $F(\mathbf{x})=\sum_{i=1}^n f_i(x_i)$.
\begin{assumption}\label{assump:nonempty-set}
\textup{The optimal solution set of problem \eqref{eq:optlinearconstraint} is non-empty. We let $\mathbf{x}^*$ denote an optimal solution of the problem \eqref{eq:optlinearconstraint}.
}
\end{assumption}

\subsection{Multiagent Distributed ADMM}\label{sec:algorithm}
In this section, we propose a distributed ADMM algorithm to solve {problem} \eqref{eq:optlinearconstraint}. 
{We first use a reformulation technique (this technique was introduced in \cite{bertsekas1989parallel} to separate optimization variables in a constraint, allowing them to be updated simultaneously in an ADMM iteration), which allows us to separate each constraint associated with a node into  multiple constraints that involve only the variable corresponding to one of the neighboring nodes}. We expand the constraint $A \mathbf{x}=0$  so that for each  node $i$, we have $\sum_{j \in N(i)} A_{ij} x_j=0$,
where $A_{ij}=P_{ij} \otimes I_d$ is a $d \times d$ matrix. 
We let $A_{ij} x_j=z_{ij} \in \mathbb{R}^d$ to obtain the following reformulation: 
\begin{align}\label{eq:optreformulation}
& \min_{\mathbf{x}, ~ \mathbf{z} } F(\mathbf{x})  \nonumber \\
& \text{ s.t. } A_{ij} x_j = z_{ij}, ~~ \text{ for } i=1, \dots, n, ~ j \in N(i), \nonumber \\
& ~~~~ \sum_{j \in N(i)} z_{ij}=0, ~ \text{ for } i=1, \dots, n. 
\end{align}
For each equality constraint in \eqref{eq:optreformulation}, we let $\lambda_{ij} \in \mathbb{R}^d$ be the corresponding Lagrange multiplier and form the augmented Lagrangian function by adding a quadratic 
penalty with penalty parameter $c>0$ for feasibility violation to the Lagrangian function as 
\begin{align*}
L_c(\mathbf{x}, \mathbf{z},\mathbf{\lambda}) = F(\mathbf{x})  & + \sum_{i=1}^n \sum_{j \in N(i)} \lambda_{ij}' (A_{ij} x_j - z_{ij})  \\
& + \frac{c}{2} \sum_{i=1}^n \sum_{j \in N(i)} ||A_{ij} x_j - z_{ij}||_2^2. 
\end{align*}

We now use ADMM algorithm (see e.g. \cite{bertsekas2003convex}). ADMM algorithm generates  primal-dual sequences $\{x_j(t)\}, \{z_{ij}(t)\}$, and $\{\lambda_{ij}(t)\}$ which at iteration $t+1$ are  updated as follows: 

\begin{enumerate}
\item For any $j= 1, \dots, n$, we update $x_j$ as 
\begin{align}\label{eq:temp0}
& x_j(t+1)\in  \text{argmin}_{x_j \in \mathbb{R}^d}  L_c(\mathbf{x}, \mathbf{z}(t), \mathbf{\lambda}(t)). 
\end{align}
\item For any $i= 1, \dots, n$, we update the vector $\mathbf{z}_i=[z_{ij}]_{j\in N(i)}$ as
\begin{align}\label{eq:step2}
& \mathbf{z}_{i}(t+1) \in \text{argmin}_{\mathbf{z}_{i} \in Z_i}  L_c(\mathbf{x}(t+1), \mathbf{z}, \mathbf{\lambda}(t)). , 
\end{align}
where $Z_i=\{\mathbf{z}_i\ |\ \sum_{j \in N(i)} z_{ij}=0\}$.
\item For $i= 1, \dots, n$ and $ j \in N(i)$ we update $\lambda_{ij}$ as
\begin{align}\label{eq:temp3}
\lambda_{ij}(t+1)= \lambda_{ij}(t)+ c (A_{ij} x_j(t+1) - z_{ij} (t+1)).
\end{align} 
\end{enumerate}
One can implement this algorithm in a distributed manner, where node $i$ maintains variables $\lambda_{ij}(t)$ and $z_{ij}(t)$ for all $j \in N(i)$ (\cite{ermin}).  However, using the inherent symmetries in the problem, we can significantly reduce the number of variables that each node requires to maintain from $O(|E|)$ to $O(|V|)$. 

We first show that for all $t$, $i$, and $j \in N(i)$, we have $\lambda_{ij}(t)= p_i (t)$. This reduction shows that the algorithm need not maintain dual variables $\lambda_{ij}(t)$ for each $i$ and its neighbors $j$, but instead can operate with the lower dimensional node-based dual variable $p_i(t)$. The dual variable $p_i(t)$ can be updated  using primal variables $x_j(t)$ for all $j \in N(i)$. The second observation is that $z_{ij}(t)= A_{ij} x_j(t) - y_i(t)$, where $y_i(t) = \frac{1}{d_i + 1} \left({[A]^i}\right)' \mathbf{x}(t)$ and $\left({[A]^i}\right)'= (P_{i1}, \dots, P_{in}) \otimes I_d$. This reduction shows that the algorithm need not maintain primal variables $z_{ij}(t)$ for each $i$ and its neighbors $j$, but instead can operate with the lower dimensional node-based primal variables $y_i(t)$, where $y_i(t)$ is node $i$'s estimate of the primal variable (obtained as the average of primal variables of his own neighbors). The aforementioned reductions are shown in the following proposition.

\begin{proposition}\label{pro:broadcast}
The sequence $\{x_i(t)\}_{t=0}^{\infty}$ for $i=1, \dots, n$ generated by implementing the steps presented in Algorithm \eqref{alg:ADMM} is the same as the sequence generated by the ADMM algorithm. 
\end{proposition}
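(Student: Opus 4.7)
The plan is to proceed by induction on the iteration index $t$, establishing simultaneously the two dimension-reducing identities
$$ \lambda_{ij}(t) = p_i(t), \qquad z_{ij}(t) = A_{ij} x_j(t) - y_i(t) \quad \text{for every } j \in N(i), $$
where $p_i(t)$ and $y_i(t)$ are the node-based variables appearing in Algorithm \eqref{alg:ADMM}. The base case is handled by the natural initialization $\lambda_{ij}(0)=0$ (so $p_i(0)=0$) and a choice of $z_{ij}(0)$ consistent with some $y_i(0)$; the inductive step will then propagate both identities automatically.

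For the inductive step, the central computation is the explicit solution of the constrained $\mathbf{z}_i$ subproblem in \eqref{eq:step2}. This is the minimization of a strictly convex quadratic in $\mathbf{z}_i$ subject to the linear constraint $\sum_{j\in N(i)} z_{ij}=0$. Introducing a Lagrange multiplier $\mu_i$ for this constraint and writing the stationarity conditions yields
$$ z_{ij}(t+1) = A_{ij} x_j(t+1) + \frac{1}{c}\bigl(\lambda_{ij}(t) - \mu_i\bigr), $$
with $\mu_i$ determined by enforcing the constraint. Substituting the inductive hypothesis $\lambda_{ij}(t)=p_i(t)$ (which is independent of $j$) causes the Lagrangian contribution to cancel when $\mu_i$ is reinserted, leaving precisely $z_{ij}(t+1) = A_{ij} x_j(t+1) - y_i(t+1)$ with $y_i(t+1) = \frac{1}{d_i+1}\bigl([A]^i\bigr)' \mathbf{x}(t+1)$. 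Feeding this into the dual update \eqref{eq:temp3} gives
$$ \lambda_{ij}(t+1) = \lambda_{ij}(t) + c\bigl(A_{ij} x_j(t+1) - z_{ij}(t+1)\bigr) = p_i(t) + c\, y_i(t+1), $$
which is manifestly independent of $j$, so we may define $p_i(t+1)$ to be this common value and the induction closes.

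Finally, the unconstrained $x_j$ update \eqref{eq:temp0} is characterized by its first-order optimality condition $\nabla f_j(x_j) + \sum_{i:\, j\in N(i)} A_{ij}'\lambda_{ij}(t) + c \sum_{i:\, j\in N(i)} A_{ij}'\bigl(A_{ij} x_j - z_{ij}(t)\bigr) = 0$; plugging in the two identities replaces every appearance of $\lambda_{ij}(t)$ and $z_{ij}(t)$ by $p_i(t)$ and $y_i(t)$ for $i\in N(j)$, reproducing exactly the $x_j$ update of Algorithm \eqref{alg:ADMM}. The only genuine subtlety is the constrained $z$-subproblem, where one must exploit that $\lambda_{ij}(t)$ depends only on $i$ to see the cancellation with $\mu_i$; everything else is bookkeeping using $A_{ij}=P_{ij}\otimes I_d$ together with the symmetry $j\in N(i)\Leftrightarrow i\in N(j)$.
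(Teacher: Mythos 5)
Your proposal is correct and follows exactly the route the paper sketches in the text preceding the proposition, namely an induction that propagates the two node-based identities $\lambda_{ij}(t)=p_i(t)$ and $z_{ij}(t)=A_{ij}x_j(t)-y_i(t)$; the paper itself never writes out this proof (it is absent from the appendix), so your argument supplies the missing details. In particular, your explicit Lagrange-multiplier solution of the constrained $\mathbf{z}_i$-subproblem, giving $\mu_i=p_i(t)+c\,y_i(t+1)$ and hence $z_{ij}(t+1)=A_{ij}x_j(t+1)-y_i(t+1)$, and the resulting $j$-independence of $\lambda_{ij}(t+1)=p_i(t)+c\,y_i(t+1)=p_i(t+1)$, are exactly the computations needed to close the induction, and they check out.
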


The steps of the algorithm can be implemented in a distributed way, meaning that each node first updates her estimates based on the information received from her neighboring nodes and then broadcasts her updated estimates to her neighboring nodes. Each node $i$ maintains local variables $x_i(t)$, $y_i(t)$, and $p_i(t)$ and updates these variables using communication with its neighbors as follows:
\begin{itemize}
\item At the end of iteration $t$, each node  $i$ sends out $p_i(t)$ and $y_i(t)$ to all of its neighbors and then each  node such as $j$ uses $y_i(t)$ and $p_i(t)$ of all $i \in  N(j)$ to update $x_{j}(t+1)$ as in step 1. 
\item Each node $j$ sends out $x_j(t+1)$ to all of its neighbors and then each node such as $i$ computes $y_i(t+1)$ as in step 2.
\item Each node $i$ updates $p_i(t+1)$ as in step 3. 
\end{itemize}
Using this algorithm agent $i$ need to store only three variables, $x_i(t)$, $y_i(t)$, and $p_i(t)$ and update them at each iteration. Also, each agent need to communicate only with (broadcast her estimates to) its neighbors. Therefore, the overall storage requirement is $3 |V|$ and the overall communication requirement at each iteration is $|E|$. 

\begin{algorithm}[t] 
\caption{Multiagent Distributed ADMM} \label{alg:ADMM}
\begin{itemize}
\item \textbf{Initialization:} $x_i(0)$, $y_i(0)$, and $p_i(0)$ all in $\mathbb{R}^d$, for any $ i \in V$ and matrix $A \in \mathbb{R}^{nd \times nd}$.  
\item \textbf{Algorithm:}
\begin{enumerate}
\item for $i = 1, \dots, n$, let
\begin{align*}
x_i(t+1) \in \text{argmin}_{x_i \in \mathbb{R}^d} & f_i(x_i)  + \sum_{j \in N(i)} ( p'_j(t) A_{ji} x_i \nonumber \\
& + \frac{c}{2} || y_j(t) + A_{ji} (x_i-x_i(t) ||_2^2). 
\end{align*}
\item for $i = 1, \dots, n$, let
\begin{align*}
y_i(t+1) = \frac{1}{d_i+1}\sum_{j \in N(i)} A_{ij} x_{j}(t+1).
\end{align*}
\item for $i = 1, \dots, n$, let 
\begin{align*}
{p}_i(t+1) = {p}_i(t) + c {y}_i(t+1)
\end{align*}
\end{enumerate}
\item \textbf{Output:} $\{x_i(t)\}_{t=0}^{\infty}$ for any $i \in V$. 
\end{itemize}
\end{algorithm}

\section{Preliminary Results}\label{sec:preliminary}
In this section, we present the preliminary results that we will use to establish our convergence rate.
we define 
\begin{align*}
\partial F(\mathbf{x}) &= \{h \in \mathbb{R}^{nd}~ :~  h=(h_1(x_1)', \dots, \nabla h_n(x_n)')'\\
&  ,~ h_i(x_i) \in \partial f_i(x_i) \}, 
\end{align*}
where for each $i$, $\partial f_i(x_i)$ denotes subdifferential of $f_i$ at $x_i$, i.e., the set of all subgradients of $f_i$ at $x_i$. In what follows, for notational simplicity we assume  $d=1$, i.e., in \eqref{eq:optlinearconstraint} $x \in \mathbb{R}$. All the analysis generalizes to the case with $x \in \mathbb{R}^d$. 
We first provide a compact representation of the evolution of primal vector $\mathbf{x}(t)$ that will be used in the convergence proof. This is a core step in proving the convergence rate as it eliminates the dependence on the other variables $y_i(t)$ and $p_i(t)$. Let $M$ be a $n \times n$ diagonal matrix with $M_{ii}= \sum_{j \in N(i)} A_{ji}^2$ and $D$ be a $n \times n$ diagonal matrix with $D_{ii}= d_i+1$. 
\begin{lemma}[\textbf{Perturbed Linear Update}]\label{lem:purturbed}
The update of Algorithm \ref{alg:ADMM} can be written as 
\begin{align*}
\mathbf{x}(t+1)  = & -\frac{1}{c} M^{-1} h(\mathbf{x}(t+1)) + \left( I - M^{-1} A' D^{-1} A \right) \mathbf{x}(t) \nonumber\\
& - M^{-1} (A' D^{-1} A) \sum_{s=0}^t \mathbf{x}(s),
\end{align*}
for some $h(\mathbf{x}(t+1)) \in \partial F(\mathbf{x}(t+1))$. 
\end{lemma}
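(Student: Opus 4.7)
The plan is to derive the compact update by combining the first-order optimality condition from step 1 of Algorithm \ref{alg:ADMM} with the closed-form expressions obtained from steps 2 and 3, and then eliminating $y(t)$ and $p(t)$ so that only $\mathbf{x}$-history remains.

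First I would write the subdifferential optimality condition for the $x_i$-subproblem in step 1. Since each summand involving $x_i$ is either differentiable or a subdifferentiable convex function, $0$ must lie in
\begin{equation*}
\partial f_i(x_i(t{+}1)) + \sum_{j \in N(i)} A_{ji} p_j(t) + c \sum_{j \in N(i)} A_{ji}\bigl(y_j(t) + A_{ji}(x_i(t{+}1)-x_i(t))\bigr).
\end{equation*}
Pulling out a subgradient $h_i(x_i(t{+}1)) \in \partial f_i(x_i(t{+}1))$, recognising that $\sum_{j \in N(i)} A_{ji}^2 = M_{ii}$, and stacking the $n$ equations into one vector equation gives
\begin{equation*}
-h(\mathbf{x}(t{+}1)) = A' p(t) + c A' y(t) + c M\bigl(\mathbf{x}(t{+}1) - \mathbf{x}(t)\bigr),
\end{equation*}
where I use that $A$ is symmetric in index structure so the "column" sums in the optimality condition become $A'$ applied to $p$ and $y$.

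Next I would eliminate $y(t)$ and $p(t)$. Step 2 of the algorithm, written componentwise $D_{ii}\,y_i(t) = (A\mathbf{x}(t))_i$, is exactly $y(t) = D^{-1}A\mathbf{x}(t)$. Substituting this into the vector equation and dividing by $cM$ (which is positive-diagonal and hence invertible) produces the $(I - M^{-1}A'D^{-1}A)\mathbf{x}(t)$ term once we rearrange. For the dual variable, I would telescope step 3: $p(t) = p(0) + c\sum_{s=1}^{t} y(s) = p(0) + cD^{-1}A \sum_{s=1}^{t}\mathbf{x}(s)$. With the natural initialisation in which $p(0)$ is chosen to absorb the $s=0$ term (i.e.\ $p(0) = cD^{-1}A\mathbf{x}(0)$, or equivalently by reindexing so that the running sum starts at $s=0$), one obtains $p(t) = cD^{-1}A \sum_{s=0}^{t}\mathbf{x}(s)$, and hence $\tfrac{1}{c}M^{-1}A'p(t) = M^{-1}A'D^{-1}A \sum_{s=0}^{t}\mathbf{x}(s)$.

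Finally, solving the vector optimality relation for $\mathbf{x}(t{+}1)$ and plugging in the two substitutions yields exactly the stated identity. The only genuinely delicate step is the initialisation convention that makes the cumulative sum start at $s=0$ rather than $s=1$; once that is pinned down, the rest is algebra. I would therefore open the proof by stating the initial conditions adopted for $p$ (consistent with those used implicitly in Algorithm \ref{alg:ADMM}) and then carry out the substitutions above in order. The main obstacle is simply bookkeeping, not analysis: keeping the transposes, the direction of the edge labelling $A_{ji}$ versus $A_{ij}$, and the index on the cumulative sum all consistent, so that the diagonal matrices $M$ and $D$ emerge correctly from the sums over $N(i)$.
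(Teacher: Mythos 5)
Your proposal is correct and follows essentially the same route as the paper's own proof: write the first-order optimality condition of step 1, substitute the closed forms $y(t)=D^{-1}A\mathbf{x}(t)$ and $p(t)=cD^{-1}A\sum_{s=0}^{t}\mathbf{x}(s)$ obtained from steps 2 and 3, stack over $i$, and left-multiply by $\frac{1}{c}M^{-1}$. Your explicit treatment of the initialization convention (choosing $p(0)=cD^{-1}A\mathbf{x}(0)$ so that the cumulative sum starts at $s=0$) is, if anything, a detail the paper's proof passes over silently.
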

Lemma \autoref{lem:purturbed} shows $\mathbf{x}(t+1)$ can be written as a perturbed linear combination of $\{\mathbf{x}(s)\}_{s=0}^t$ with the perturbation being the term  $-\frac{1}{c} M^{-1} h(\mathbf{x}(t+1))$. The intuition behind the convergence rate analysis is that the linear term that relates $\mathbf{x}(t+1)$ to $\mathbf{x}(0), \dots, \mathbf{x}(t)$ guarantees that the sequence $\mathbf{x}(t)$ converges to a consensus point where $x_i(t)= x_j(t)$ for all $i, j \in V$; and the perturbation term $-\frac{1}{c} M^{-1} h(\mathbf{x}(t+1))$ guarantees that the converging point minimizes the objective function $F(\mathbf{x})= \sum_{i=1}^n f_i(x_i)$.

\section{Sub-linear Rate of Convergence}\label{sec:sublinearrate}
In this section, we show the sublinear rate of convergence. We define two auxiliary sequences that we will use in proving the convergence rates.
Since $A'D^{-1}A$ is positive semidefinite (see Lemma \ref{lem:PSD} in the appendix), we can define $Q= (A'D^{-1}A)^{1/2}$. In other words, we let $Q= V\Sigma^{1/2} V'$, where $A'D^{-1}A = V\Sigma V'$ is the singular value decomposition of the symmetric matrix $A'D^{-1}A$.  We define the auxiliary sequences  \[\mathbf{r}(t) = \sum_{s=0}^t Q \mathbf{x}(s)  ,\]
and 
\[\mathbf{q}(t)= \begin{pmatrix}
  \mathbf{r}(t)  \\
  \mathbf{x}(t) 
 \end{pmatrix}. \] We also let \[G= \begin{pmatrix}
  I & 0  \\
  0 & M- A'D^{-1}A 
 \end{pmatrix}.\] 
 Next, we show a proposition that bounds the function value at each iteration. 
\begin{proposition}\label{pro:telescopic}
For any $\mathbf{r} \in \mathbb{R}^d$ and $t$, the sequence generated by Algorithm \ref{alg:ADMM} satisfies:
\begin{align*}
& \frac{2}{c}\left(F(\mathbf{x}(t+1))- F(\mathbf{x}^*)  \right)+ 2 \mathbf{r}' Q \mathbf{x}(t+1)  \nonumber\\ & \le ||\mathbf{q}(t)- \mathbf{q}^*||_G^2  - ||\mathbf{q}(t+1)- \mathbf{q}^*||_G^2 - ||\mathbf{q}(t)- \mathbf{q}(t+1)||_G^2 ,
\end{align*}
where $\mathbf{q}^*= \begin{pmatrix}
  \mathbf{r}^*  \\
  \mathbf{x}^* 
 \end{pmatrix}$. 
\end{proposition}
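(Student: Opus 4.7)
The plan is to combine Lemma \ref{lem:purturbed} with the convexity of $F$ and then use the standard polarization identity $2\langle u,v\rangle_H = \|u+v\|_H^2 - \|u\|_H^2 - \|v\|_H^2$ to rewrite the resulting cross-terms as a telescoping difference of $G$-weighted squared norms. The role of the auxiliary variable $\mathbf{r}(t)= \sum_{s=0}^t Q\mathbf{x}(s)$ is precisely to absorb the ``running sum'' term in Lemma \ref{lem:purturbed}, so that everything ends up expressible in terms of $\mathbf{q}(t)=(\mathbf{r}(t),\mathbf{x}(t))$.

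Concretely, I would first rearrange Lemma \ref{lem:purturbed}. Multiplying through by $cM$ and grouping, the identity becomes
\begin{align*}
\tfrac{1}{c} h(\mathbf{x}(t+1)) &= (M-A'D^{-1}A)(\mathbf{x}(t)-\mathbf{x}(t+1)) \\
&\quad - Q\,\mathbf{r}(t+1),
\end{align*}
where I use $A'D^{-1}A=Q^2$ and $\mathbf{r}(t+1)=\sum_{s=0}^{t+1}Q\mathbf{x}(s)$. Then by convexity of $F$,
\begin{align*}
\tfrac{1}{c}\bigl(F(\mathbf{x}(t+1))-F(\mathbf{x}^*)\bigr) &\le \tfrac{1}{c}\,h(\mathbf{x}(t+1))'(\mathbf{x}(t+1)-\mathbf{x}^*).
\end{align*}
Substituting the expression for $\tfrac{1}{c}h(\mathbf{x}(t+1))$ and using that $A\mathbf{x}^*=0$ (so $Q\mathbf{x}^*=0$, giving $\mathbf{r}(t+1)'Q(\mathbf{x}(t+1)-\mathbf{x}^*)=\mathbf{r}(t+1)'Q\mathbf{x}(t+1)$), I obtain a bound of the form
\begin{align*}
\tfrac{1}{c}(F(\mathbf{x}(t+1))-F(\mathbf{x}^*))+\mathbf{r}(t+1)'Q\mathbf{x}(t+1) \\
\le (\mathbf{x}(t)-\mathbf{x}(t+1))'H(\mathbf{x}(t+1)-\mathbf{x}^*),
\end{align*}
with $H:=M-A'D^{-1}A$. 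Adding $-\mathbf{r}^{*\prime}Q\mathbf{x}(t+1)$ to both sides (where $\mathbf{r}^*$ is the reference point playing the role of $\mathbf{r}$ in the statement) and doubling yields cross-terms in both $H$ and the identity metric.

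Finally, I would apply the polarization identity to each cross-term. For the primal piece, with $u=\mathbf{x}(t)-\mathbf{x}(t+1)$ and $v=\mathbf{x}(t+1)-\mathbf{x}^*$ so that $u+v=\mathbf{x}(t)-\mathbf{x}^*$, the identity gives
\begin{align*}
2(\mathbf{x}(t)-\mathbf{x}(t+1))'H(\mathbf{x}(t+1)-\mathbf{x}^*) &= \|\mathbf{x}(t)-\mathbf{x}^*\|_H^2 \\
&\quad -\|\mathbf{x}(t+1)-\mathbf{x}^*\|_H^2 \\
&\quad -\|\mathbf{x}(t)-\mathbf{x}(t+1)\|_H^2.
\end{align*}
For the dual/auxiliary piece, using $Q\mathbf{x}(t+1)=\mathbf{r}(t+1)-\mathbf{r}(t)$, the same polarization trick applied to $a=\mathbf{r}(t+1)-\mathbf{r}^*$ and $b=\mathbf{r}(t+1)-\mathbf{r}(t)$ produces
\begin{align*}
-2(\mathbf{r}(t+1)-\mathbf{r}^*)'Q\mathbf{x}(t+1) &= \|\mathbf{r}(t)-\mathbf{r}^*\|^2 \\
&\quad -\|\mathbf{r}(t+1)-\mathbf{r}^*\|^2 \\
&\quad -\|\mathbf{r}(t+1)-\mathbf{r}(t)\|^2.
\end{align*}
Summing the two and recalling the block-diagonal structure of $G$ so that $\|\mathbf{q}(t)-\mathbf{q}^*\|_G^2=\|\mathbf{r}(t)-\mathbf{r}^*\|^2+\|\mathbf{x}(t)-\mathbf{x}^*\|_H^2$, and similarly for the other terms, collapses the right-hand side into exactly the claimed telescoping expression.

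The main obstacle I anticipate is bookkeeping rather than anything deep: I must be careful that $H=M-A'D^{-1}A$ is positive semidefinite so that $\|\cdot\|_G$ is a genuine seminorm (this should be a consequence of the diagonal-dominance character of $M$ relative to $A'D^{-1}A$, presumably established elsewhere in the paper's appendix), and I need to check that the condition $A\mathbf{x}^*=0$ is actually what justifies dropping $\mathbf{r}(t+1)'Q\mathbf{x}^*$. The other subtle point is aligning the free parameter $\mathbf{r}$ on the left with the $\mathbf{r}^*$ coordinate of $\mathbf{q}^*$ on the right; once that identification is made, the polarization identities slot in cleanly and the proof is essentially a careful bookkeeping exercise.
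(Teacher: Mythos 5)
Your proposal is correct and follows essentially the same route as the paper: the paper's own proof also rearranges Lemma \ref{lem:purturbed} into the recursion $(M-A'D^{-1}A)(\mathbf{x}(t+1)-\mathbf{x}(t)) = -Q\mathbf{r}(t+1)-\frac{1}{c}h(\mathbf{x}(t+1))$ (its Lemma \ref{lem:recursion}), applies the subgradient inequality, uses $Q\mathbf{x}^*=0$ together with $Q\mathbf{x}(t+1)=\mathbf{r}(t+1)-\mathbf{r}(t)$, and then collapses both cross-terms via the polarization identity into the block-diagonal $G$-norm telescoping form. Your two anticipated subtleties (positive semidefiniteness of $M-A'D^{-1}A$, proved in the paper's Lemma \ref{lem:PSD}, and the identification of the free $\mathbf{r}$ with the first block of $\mathbf{q}^*$) are exactly the points the paper relies on.
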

In order to obtain $O(1/T)$ convergence rate, we consider the performance of the algorithm at the ergodic vector defined as $\hat{\mathbf{x}}(T) = (\hat{x}_1 (T), \dots, \hat{x}_n (T))$, where 
$$\hat{{x}}_i(T)= \frac{1}{T} \sum_{t=1}^{T} {x}_i(t),$$ for any $i= 1, \dots, n$. Note that each agent $i$ can construct this vector by simple recursive time-averaging of its estimate $x_i(t)$. 
Let $(\mathbf{x}^*, \hat{\mathbf{r}})$ be a primal-dual optimal solution of 
\begin{align*}
\min_{Q \mathbf{x}=0} F(\mathbf{x}).
\end{align*}
Since $\text{null}(Q)=\text{null}(P)$, under Assumption \eqref{assump:matrixA}, the optimal primal solution of this problem is the same as of the original problem \eqref{eq:optlinearconstraint}
Next, we show both objective function and feasibility violation converges with rate $O(\frac{1}{T})$ to the optimal value. 
\begin{theorem}\label{thm:sublinearrate}
For any $T$, we have  
\begin{align*}
& | F(\hat{\mathbf{x}}(T)) - F(\mathbf{x}^*)|  \le \frac{c}{2T} \left(||\mathbf{x}(0)-\mathbf{x}^*||_{M- A' D^{-1} A}^2 \right) \\
& + \frac{c}{2T} \left( \max\{ ||\mathbf{r}(0)- 2 \hat{\mathbf{r}}||_2^2, ||\mathbf{r}(0)||_2^2 \} \right).
\end{align*}
We also have  
\begin{align*}
&   ||Q \hat{\mathbf{x}}(T)||_2  \le  \frac{1}{2T} \left(||\mathbf{x}(0)-\mathbf{x}^*||_{M- A' D^{-1} A}^2 \right) \\
& + \frac{1}{2T} \left( 2||\mathbf{r}(0)-  \hat{\mathbf{r}}||_2^2+2 \right).
\end{align*}
\end{theorem}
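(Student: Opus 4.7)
The plan is to telescope Proposition \ref{pro:telescopic} over $t = 0, 1, \dots, T-1$, rewrite the resulting inequality in terms of the ergodic average $\hat{\mathbf{x}}(T)$, and then specialize the free parameter $\mathbf{r}$ three different times to extract an upper bound on $F - F^*$, a lower bound on $F - F^*$, and the feasibility bound.

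Summing the inequality from Proposition \ref{pro:telescopic} telescopes the right-hand side to $\|\mathbf{q}(0) - \mathbf{q}^*\|_G^2 - \|\mathbf{q}(T) - \mathbf{q}^*\|_G^2 - \sum_{t=0}^{T-1}\|\mathbf{q}(t) - \mathbf{q}(t+1)\|_G^2$, and I would discard the two non-positive terms. On the left I would use $\sum_{t=0}^{T-1}\mathbf{x}(t+1) = T\hat{\mathbf{x}}(T)$ together with Jensen's inequality (convexity of $F$) to get $\sum_{t=1}^{T}F(\mathbf{x}(t)) \ge T\,F(\hat{\mathbf{x}}(T))$. The block-diagonal structure of $G$ splits the initial error as $\|\mathbf{q}(0) - \mathbf{q}^*\|_G^2 = \|\mathbf{r}(0) - \mathbf{r}\|_2^2 + \|\mathbf{x}(0) - \mathbf{x}^*\|_{M - A'D^{-1}A}^2$. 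This produces a master inequality valid for every $\mathbf{r}$:
\[
\frac{2T}{c}\bigl(F(\hat{\mathbf{x}}(T)) - F(\mathbf{x}^*)\bigr) + 2T\,\mathbf{r}'Q\hat{\mathbf{x}}(T) \le \|\mathbf{r}(0) - \mathbf{r}\|_2^2 + \|\mathbf{x}(0) - \mathbf{x}^*\|_{M - A'D^{-1}A}^2.
\]

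Each of the three bounds now comes from a specific choice of $\mathbf{r}$. Setting $\mathbf{r}=0$ kills the inner-product term on the left and immediately yields $F(\hat{\mathbf{x}}(T)) - F(\mathbf{x}^*) \le \tfrac{c}{2T}\bigl(\|\mathbf{r}(0)\|_2^2 + \|\mathbf{x}(0)-\mathbf{x}^*\|_{M-A'D^{-1}A}^2\bigr)$. For the reverse direction I would set $\mathbf{r} = 2\hat{\mathbf{r}}$, where $\hat{\mathbf{r}}$ is the optimal dual variable attached to the constraint $Q\mathbf{x}=0$. The KKT conditions give a subgradient inclusion which, combined with $Q\mathbf{x}^* = 0$ and convexity of $F$, implies $F(\hat{\mathbf{x}}(T)) - F(\mathbf{x}^*) + c\,\hat{\mathbf{r}}'Q\hat{\mathbf{x}}(T) \ge 0$. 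Substituting the lower bound $4T\,\hat{\mathbf{r}}'Q\hat{\mathbf{x}}(T) \ge -\tfrac{4T}{c}(F(\hat{\mathbf{x}}(T)) - F(\mathbf{x}^*))$ into the master inequality collapses the left-hand side to $-\tfrac{2T}{c}(F(\hat{\mathbf{x}}(T)) - F(\mathbf{x}^*))$, giving $F(\mathbf{x}^*) - F(\hat{\mathbf{x}}(T)) \le \tfrac{c}{2T}\bigl(\|\mathbf{r}(0)-2\hat{\mathbf{r}}\|_2^2 + \|\mathbf{x}(0)-\mathbf{x}^*\|_{M-A'D^{-1}A}^2\bigr)$. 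Taking the larger of the two right-hand-side norms proves the objective bound. For the feasibility bound I would choose $\mathbf{r} = \hat{\mathbf{r}} + Q\hat{\mathbf{x}}(T)/\|Q\hat{\mathbf{x}}(T)\|_2$ (the case $Q\hat{\mathbf{x}}(T)=0$ is trivial); then the inner product on the left becomes $2T\,\hat{\mathbf{r}}'Q\hat{\mathbf{x}}(T) + 2T\,\|Q\hat{\mathbf{x}}(T)\|_2$, and the sum of the first two left-hand-side terms $\tfrac{2T}{c}(F-F^*) + 2T\hat{\mathbf{r}}'Q\hat{\mathbf{x}}(T)$ is again non-negative by the same KKT inclusion. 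On the right, $\|a-b\|_2^2 \le 2\|a\|_2^2 + 2\|b\|_2^2$ gives $\|\mathbf{r}(0) - \hat{\mathbf{r}} - Q\hat{\mathbf{x}}(T)/\|Q\hat{\mathbf{x}}(T)\|_2\|_2^2 \le 2\|\mathbf{r}(0)-\hat{\mathbf{r}}\|_2^2 + 2$, and dividing through by $2T$ yields the stated feasibility bound.

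The main obstacle I expect is the careful bookkeeping around the factor $c$: the imbalance between the $2/c$ prefactor on $F-F^*$ and the plain $2$ prefactor on $\mathbf{r}'Q\hat{\mathbf{x}}(T)$ in Proposition \ref{pro:telescopic} forces $\hat{\mathbf{r}}$ to be read as the scaled dual, so that the KKT subgradient takes the form $-cQ\hat{\mathbf{r}} \in \partial F(\mathbf{x}^*)$ and the cancellation in the lower-bound argument is clean. With that alignment in place, the rest is routine: a telescoping sum, one application of Jensen's inequality, and one use of $\|a+b\|_2^2 \le 2\|a\|_2^2 + 2\|b\|_2^2$.
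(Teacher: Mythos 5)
Your proposal is correct and follows essentially the same route as the paper's proof: telescope Proposition \ref{pro:telescopic}, apply Jensen's inequality to pass to the ergodic average, and then instantiate the free parameter $\mathbf{r}$ at $0$, at $2\hat{\mathbf{r}}$ (combined with the saddle-point inequality for the lower bound), and at $\hat{\mathbf{r}} + Q\hat{\mathbf{x}}(T)/\|Q\hat{\mathbf{x}}(T)\|_2$ with $\|a-b\|_2^2 \le 2\|a\|_2^2 + 2\|b\|_2^2$ for the feasibility bound. Your explicit handling of the $Q\hat{\mathbf{x}}(T)=0$ case and the remark on reading $\hat{\mathbf{r}}$ as the scaled dual are minor clarifications of details the paper leaves implicit.
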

This theorem shows that the objective function at the ergodic average of the sequence of estimates generated by Algorithm \ref{alg:ADMM} converges with rate $O(\frac{1}{T})$ to the optimal solution. We next characterize the network effect on the performance guarantee.  
\begin{theorem}\label{thm:sublinearnetworkdependence}
For any $T$, starting form $\mathbf{x}(0)=0$, we have  
\begin{align*}
 | F(\hat{\mathbf{x}}(T)) - F(\mathbf{x}^*)| \le & \frac{c}{2T} ||\mathbf{x}^*||_2^2 \lambda_M +  \frac{2}{c T} \frac{U^2}{\tilde{\lambda}_m}, 
\end{align*}
and 
\begin{align*}
   ||Q \hat{\mathbf{x}}(T)|| \le  \frac{1}{2T} ||\mathbf{x}^*||_2^2 \lambda_M  + \frac{1}{2T} \left( 2+ 2\frac{U^2}{c^2 \tilde{\lambda}_m} \right), 
\end{align*}
where $U$ is a bound on the subgradients of the function $F$ at $\mathbf{x}^*$, i.e., $\|\mathbf{v}\|\le U$ for all $\mathbf{v}\in \partial F(\mathbf{x}^*)$,  
$\tilde{\lambda}_m$ is the smallest non-zero eigen value of $A' D^{-1} A$, and $\lambda_M$ is the largest eigen value of $M-A' D^{-1} A$. 
\end{theorem}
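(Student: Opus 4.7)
My plan is to specialize Theorem \ref{thm:sublinearrate} to the initialization $\mathbf{x}(0)=0$ and then bound the two constants on the right-hand sides by the promised network-dependent quantities. With $\mathbf{x}(0)=0$, the auxiliary sequence satisfies $\mathbf{r}(0)=Q\mathbf{x}(0)=0$, so the bounds of Theorem \ref{thm:sublinearrate} collapse to
\begin{align*}
|F(\hat{\mathbf{x}}(T))-F(\mathbf{x}^*)| &\le \frac{c}{2T}\|\mathbf{x}^*\|_{M-A'D^{-1}A}^2 + \frac{2c}{T}\|\hat{\mathbf{r}}\|_2^2, \\
\|Q\hat{\mathbf{x}}(T)\|_2 &\le \frac{1}{2T}\|\mathbf{x}^*\|_{M-A'D^{-1}A}^2 + \frac{1}{2T}\bigl(2\|\hat{\mathbf{r}}\|_2^2 + 2\bigr),
\end{align*}
so it suffices to bound $\|\mathbf{x}^*\|_{M-A'D^{-1}A}^2$ and $\|\hat{\mathbf{r}}\|_2^2$ separately.

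For the first quantity, because $M-A'D^{-1}A$ is symmetric positive semidefinite, its quadratic form is controlled by its largest eigenvalue: $\|\mathbf{x}^*\|_{M-A'D^{-1}A}^2 \le \lambda_M \|\mathbf{x}^*\|_2^2$. This directly produces the first term on the right-hand side of each inequality in Theorem \ref{thm:sublinearnetworkdependence}.

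The main work is to show $\|\hat{\mathbf{r}}\|_2^2 \le U^2/(c^2\tilde{\lambda}_m)$. For this I use the KKT conditions of the reformulated problem $\min_{Q\mathbf{x}=0} F(\mathbf{x})$: there exists a subgradient $v \in \partial F(\mathbf{x}^*)$ satisfying $v + c Q' \hat{\mathbf{r}} = 0$, where the factor $c$ arises because $\hat{\mathbf{r}}$ is the scaled dual variable used inside Proposition \ref{pro:telescopic} (this is the same normalization that puts the factor $\tfrac{1}{c}$ in front of the subgradient term in Lemma \ref{lem:purturbed}). Since $\text{null}(Q) = \text{null}(P) = \text{span}\{\mathbf{1}\}$, the dual $\hat{\mathbf{r}}$ is not unique; I may replace it by its projection onto the range of $Q$ without changing $Q'\hat{\mathbf{r}}$, so I assume without loss of generality that $\hat{\mathbf{r}}$ lies in the range of $Q$. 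On this subspace the symmetric operator $Q=(A'D^{-1}A)^{1/2}$ is bounded below by its smallest nonzero eigenvalue $\sqrt{\tilde{\lambda}_m}$, giving $\|Q'\hat{\mathbf{r}}\|_2 \ge \sqrt{\tilde{\lambda}_m}\,\|\hat{\mathbf{r}}\|_2$. Combining with $\|v\|_2 \le U$ yields $\|\hat{\mathbf{r}}\|_2 \le U/(c\sqrt{\tilde{\lambda}_m})$, and hence the desired bound on $\|\hat{\mathbf{r}}\|_2^2$. Plugging this and the eigenvalue bound into the two collapsed inequalities above produces the two claimed estimates.

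The main obstacle is the KKT step: one must correctly track the normalization of the dual variable $\hat{\mathbf{r}}$ inherited from Proposition \ref{pro:telescopic} so that the factor $1/c^2$, rather than $1$, appears in the final bound on $\|\hat{\mathbf{r}}\|_2^2$, and one must justify restricting to the minimum-norm dual in the range of $Q$ (this is where Assumption \ref{assump:matrixA} on $\text{null}(P)$ enters in a crucial way). Everything else is a routine application of eigenvalue estimates to the bounds already provided by Theorem \ref{thm:sublinearrate}.
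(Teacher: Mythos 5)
Your proposal is correct and follows essentially the same route as the paper: specialize Theorem \ref{thm:sublinearrate} to $\mathbf{x}(0)=0$ (so $\mathbf{r}(0)=0$), bound $\|\mathbf{x}^*\|_{M-A'D^{-1}A}^2$ by $\lambda_M\|\mathbf{x}^*\|_2^2$, and bound the dual norm by $U^2/(c^2\tilde{\lambda}_m)$ via the KKT/saddle-point identity $c Q\hat{\mathbf{r}} \in -\partial F(\mathbf{x}^*)$ together with the minimum-norm dual in the range of $Q$. The paper's lemma realizes this last step with an explicit SVD construction $\tilde{\mathbf{r}}=\sum_i (c_i/\sigma_i)\mathbf{u}_i$, while you phrase it as a projection onto $\mathrm{range}(Q)$ plus the lower bound $\|Q\hat{\mathbf{r}}\|\ge\sqrt{\tilde{\lambda}_m}\|\hat{\mathbf{r}}\|$, which is the same argument in different notation.
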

\begin{remark}
\textup{
Both the optimality of the objective function value at the ergodic average and the feasibility violation converge with rate $O(\frac{1}{T})$. Our guaranteed rates show a novel dependency on the network structure and communication matrix through  $\tilde{\lambda}_m$ and $\lambda_M$. Therefore, for a given function, in order to obtain a better performance guarantee we need to maximize $\tilde{\lambda}_m$ and minimize $\lambda_M$. In Section \eqref{sec:numericalresults} we show that these terms depend on the algebraic connectivity of the network and provide explicit dependencies solely on the network structure when the communication matrix is the Laplacian of the graph.
}
\end{remark}
\section{Linear Rate of Convergence}\label{sec:linearrate}
In order to show the linear rate of convergence, we adopt the following standard assumptions. 
\begin{assumption}[\textbf{Strongly convex and Lipschitz Gradient}]\label{assump:strnogconvnlipschittzgard}
\textup{
For any $i=1, \dots, n$, the function $f_i$ is  differentiable and has Lipschitz continuous gradient, i.e., 
\begin{align*}
| \nabla f_i(x)- \nabla f_i(y)| \le L_{f_i} ||x-y||_2, \text{  for any  } x, y \in \mathbb{R}^d,
\end{align*}
for some $L_{f_i} \ge 0$. The function $f_i$ is also strongly convex with parameter $\nu_{f_i} > 0$, i.e., $f_i(x)- \frac{\nu_{f_i}}{2} ||x||_2^2$ is convex. 
}
\end{assumption}
We let $\nu= \min_{1 \le i \le n} \nu_{f_i}$ and $L= \max_{1 \le i \le n} L_{f_i}$, and define the condition number of $F(\mathbf{x})$ (or the condition number of problem  \eqref{eq:optlinearconstraint}) as $\kappa_f=\frac{L}{\nu}$. Note that when the functions are differentiable, we have 
\begin{align*}
\nabla F(\mathbf{x})= (\nabla f_1(x_1)', \dots, \nabla f_n(x_n)')' \in \mathbb{R}^{nd}.
\end{align*}

Assumption \autoref{assump:strnogconvnlipschittzgard} results in the following standard inequalities for the aggregate function $F(\mathbf{x})$. 
\begin{lemma}\label{lem:strongconvexandlipschitz} 
\begin{itemize}
\item [(a)] Under Assumption \ref{assump:strnogconvnlipschittzgard}, for any $\mathbf{x}, \mathbf{y} \in \mathbb{R}^{nd}$, we have \[(\nabla F(\mathbf{x})- \nabla F(\mathbf{y}))' (\mathbf{x}- \mathbf{y}) \ge \nu ||\mathbf{x}- \mathbf{y}||_2^2.\]
\item [(b)] Under Assumption \ref{assump:strnogconvnlipschittzgard}, for any $\mathbf{x}, \mathbf{y} \in \mathbb{R}^{nd}$, we have \[(\nabla F(\mathbf{x})- \nabla F(\mathbf{y}))' (\mathbf{x}- \mathbf{y}) \ge \frac{1}{L} ||\nabla F(\mathbf{x})- \nabla F(\mathbf{y})||_2^2.\]
\item [(c)] Under convexity assumption, for any $\mathbf{x}, \mathbf{y} \in \mathbb{R}^{nd}$ and $h(\mathbf{x}) \in \partial F(\mathbf{x})$ we have \[(\mathbf{x}- \mathbf{y})' h(\mathbf{x})  \ge F(\mathbf{x})- F(\mathbf{y}). \]
\end{itemize}

\end{lemma}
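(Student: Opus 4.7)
The plan is to reduce each claim to a coordinate-wise statement about the individual $f_i$ and then sum, exploiting the fact that $F(\mathbf{x}) = \sum_{i=1}^n f_i(x_i)$ is separable, that $\nabla F(\mathbf{x}) = (\nabla f_1(x_1)', \dots, \nabla f_n(x_n)')'$, and that both the inner product and the squared Euclidean norm on $\mathbb{R}^{nd}$ decompose as sums over the blocks $x_i \in \mathbb{R}^d$. Thus all three inequalities will follow from the corresponding classical per-agent identities plus the definitions $\nu = \min_i \nu_{f_i}$ and $L = \max_i L_{f_i}$.

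For part (a), I would invoke the standard characterization of $\nu_{f_i}$-strong convexity of $f_i$, namely $(\nabla f_i(x_i) - \nabla f_i(y_i))'(x_i - y_i) \ge \nu_{f_i} \|x_i - y_i\|_2^2$, obtained for instance by applying the first-order strong-convexity inequality twice (at $x_i$ and at $y_i$) and adding. Summing over $i$, bounding $\nu_{f_i} \ge \nu$, and recombining via $\sum_i \|x_i - y_i\|_2^2 = \|\mathbf{x} - \mathbf{y}\|_2^2$ gives the claim.

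For part (b), I would apply the Baillon--Haddad co-coercivity inequality to each $f_i$: convexity combined with $L_{f_i}$-Lipschitz gradient implies
\begin{equation*}
(\nabla f_i(x_i) - \nabla f_i(y_i))'(x_i - y_i) \ge \tfrac{1}{L_{f_i}} \|\nabla f_i(x_i) - \nabla f_i(y_i)\|_2^2.
\end{equation*}
The quickest derivation is to note that $\frac{L_{f_i}}{2}\|\cdot\|_2^2 - f_i(\cdot)$ is convex, or equivalently to apply the descent lemma for $f_i$ at the trial point $y_i + \frac{1}{L_{f_i}}(\nabla f_i(x_i) - \nabla f_i(y_i))$ and rearrange. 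Summing over $i$ and using $\tfrac{1}{L_{f_i}} \ge \tfrac{1}{L}$ together with $\sum_i \|\nabla f_i(x_i) - \nabla f_i(y_i)\|_2^2 = \|\nabla F(\mathbf{x}) - \nabla F(\mathbf{y})\|_2^2$ yields (b). This is the only step where more than a one-line manipulation is needed; the rest is bookkeeping.

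For part (c), which only requires convexity, I would apply the defining subgradient inequality for each $f_i$: for any $h_i(x_i) \in \partial f_i(x_i)$, $f_i(y_i) \ge f_i(x_i) + h_i(x_i)'(y_i - x_i)$, i.e.\ $(x_i - y_i)'h_i(x_i) \ge f_i(x_i) - f_i(y_i)$. Summing over $i$ and using the block structure of $h(\mathbf{x}) \in \partial F(\mathbf{x})$ gives the stated inequality. No step in this proof is difficult in itself; the only mild obstacle is (b), whose co-coercivity content is less elementary than (a) or (c), but it is entirely standard once the auxiliary convex function $\frac{L_{f_i}}{2}\|\cdot\|_2^2 - f_i$ is introduced.
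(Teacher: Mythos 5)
Your proposal is correct and follows essentially the same route as the paper: reduce each inequality to the per-agent statement, apply the standard strong-convexity, co-coercivity (Baillon--Haddad), and subgradient inequalities to each $f_i$, then sum over $i$ using $\nu \le \nu_{f_i}$ and $L \ge L_{f_i}$. The paper's only difference is cosmetic: for part (b) it derives co-coercivity from scratch via the descent lemma and the auxiliary function $\phi_x(y) = f_i(y) - \langle \nabla f_i(x), y\rangle$, which is exactly the derivation you sketch with your trial point $y_i + \frac{1}{L_{f_i}}(\nabla f_i(x_i) - \nabla f_i(y_i))$.
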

Under Assumption \eqref{assump:strnogconvnlipschittzgard} we show that the sequence generated by Algorithm \eqref{alg:ADMM} converges linearly to the optimal solution (which is unique under these assumptions). The idea is to use strong convexity and Lipschitz gradient property of $F(\mathbf{x})$ in order to show that the $G$-norm of sequence $\mathbf{q}(t)- \mathbf{q}^*$ contracts at each iteration, providing a linear rate. 
\begin{theorem}\label{thm:linearconvergence}
Suppose Assumptions \autoref{assump:matrixA}, \autoref{assump:nonempty-set}, and  \autoref{assump:strnogconvnlipschittzgard} hold. For any value of the penalty parameter $c>0$ and $\beta \in (0,1)$, the sequence generated by Algorithm \autoref{alg:ADMM} $\{\mathbf{x}(t)\}_{t=1}^{\infty}$ satisfies
\begin{align*}
||\mathbf{x}(t)- \mathbf{x}^*||_2^2  \le \left( \frac{1}{1+\delta} \right)^{t} ||\mathbf{q}(0)- \mathbf{q}^*||_2^2,
\end{align*}
where \[\delta \le \min \left \{ \frac{2 \beta \nu }{c \lambda_M (1+ \frac{2}{\tilde{\lambda}_m})}, \frac{(1-\beta) c \tilde{\lambda}_m }{L}\right \},\]
and $\tilde{\lambda}_{m}$ is the smallest non-zero eigen value of $A'D^{-1}A$, $\lambda_{M}$ is the largest eigen value of $M-A'D^{-1}A$. 
\end{theorem}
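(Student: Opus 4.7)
The plan is to establish a one-step $G$-norm contraction of the Lyapunov function $\|\mathbf{q}(t)-\mathbf{q}^*\|_G^2$ and then iterate. The starting point is Proposition~\ref{pro:telescopic} applied with $\mathbf{r}=\mathbf{r}^*$, where $\mathbf{r}^*$ is the dual-optimal vector obtained by taking $t\to\infty$ in the update of Lemma~\ref{lem:purturbed}, which at the fixed point yields the KKT identity $\tfrac{1}{c}\nabla F(\mathbf{x}^*)+Q\mathbf{r}^*=0$ (using $Q\mathbf{x}^*=0$). Combined with $Q\mathbf{x}^*=0$, the left side of the telescoping bound collapses to $\tfrac{2}{c}\bigl[F(\mathbf{x}(t+1))-F(\mathbf{x}^*)-\nabla F(\mathbf{x}^*)'(\mathbf{x}(t+1)-\mathbf{x}^*)\bigr]$. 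By strong convexity (Lemma~\ref{lem:strongconvexandlipschitz}(a)) this Bregman divergence is at least $\tfrac{\nu}{c}\|\mathbf{x}(t+1)-\mathbf{x}^*\|_2^2$, giving
\begin{equation*}
\|\mathbf{q}(t)-\mathbf{q}^*\|_G^2-\|\mathbf{q}(t+1)-\mathbf{q}^*\|_G^2\ \ge\ \tfrac{\nu}{c}\|\mathbf{x}(t+1)-\mathbf{x}^*\|_2^2+\|\mathbf{q}(t)-\mathbf{q}(t+1)\|_G^2.
\end{equation*}
Our goal becomes to show the right side dominates $\delta\|\mathbf{q}(t+1)-\mathbf{q}^*\|_G^2=\delta\|\mathbf{r}(t+1)-\mathbf{r}^*\|_2^2+\delta\|\mathbf{x}(t+1)-\mathbf{x}^*\|_{M-A'D^{-1}A}^2$.

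For the $\mathbf{x}$-block, I use $\|\mathbf{x}(t+1)-\mathbf{x}^*\|_{M-A'D^{-1}A}^2\le\lambda_M\|\mathbf{x}(t+1)-\mathbf{x}^*\|_2^2$ so that $\beta$ times the strong-convexity surplus already bounds it provided $\delta\le\tfrac{\beta\nu}{c\lambda_M}$ (a provisional condition, to be tightened later). For the $\mathbf{r}$-block, I derive an algebraic identity for $Q(\mathbf{r}(t+1)-\mathbf{r}^*)$ by subtracting the KKT relation from the one-step form of Lemma~\ref{lem:purturbed} rewritten as
\begin{equation*}
\tfrac{1}{c}\nabla F(\mathbf{x}(t+1))+M(\mathbf{x}(t+1)-\mathbf{x}(t))+A'D^{-1}A\,\mathbf{x}(t)+Q\mathbf{r}(t)=0,
\end{equation*}
and using $\mathbf{r}(t)=\mathbf{r}(t+1)-Q\mathbf{x}(t+1)$ together with $Q^2=A'D^{-1}A$, which yields
\begin{equation*}
Q\bigl(\mathbf{r}(t+1)-\mathbf{r}^*\bigr)=-\tfrac{1}{c}\bigl(\nabla F(\mathbf{x}(t+1))-\nabla F(\mathbf{x}^*)\bigr)-(M-A'D^{-1}A)\bigl(\mathbf{x}(t+1)-\mathbf{x}(t)\bigr).
\end{equation*}
Because $\mathbf{r}(t+1)-\mathbf{r}^*$ lies in $\mathrm{range}(Q)$ (inductively $\mathbf{r}(t)\in\mathrm{range}(Q)$ from $\mathbf{r}(t+1)-\mathbf{r}(t)=Q\mathbf{x}(t+1)$), I can pass to $\|\mathbf{r}(t+1)-\mathbf{r}^*\|_2^2\le\tfrac{1}{\tilde\lambda_m}\|Q(\mathbf{r}(t+1)-\mathbf{r}^*)\|_2^2$. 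Expanding the squared norm with the inequality $\|a+b\|^2\le 2\|a\|^2+2\|b\|^2$ and using Lipschitz gradients yields $\|\mathbf{r}(t+1)-\mathbf{r}^*\|_2^2\le \tfrac{2L}{c^2\tilde\lambda_m}(\nabla F(\mathbf{x}(t+1))-\nabla F(\mathbf{x}^*))'(\mathbf{x}(t+1)-\mathbf{x}^*)+\tfrac{2}{\tilde\lambda_m}\|\mathbf{x}(t+1)-\mathbf{x}(t)\|_{M-A'D^{-1}A}^2$ (invoking Lemma~\ref{lem:strongconvexandlipschitz}(b) to convert an $L^2$ factor into $L$ times an inner product).

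The two summands on the right of this $\mathbf{r}$-block bound are handled separately. The first is absorbed by the $(1-\beta)$ fraction of the strong-convexity surplus, giving the requirement $\delta\cdot\tfrac{2L}{c^2\tilde\lambda_m}\le\tfrac{1-\beta}{c}\cdot 2$ equivalently $\delta\le\tfrac{(1-\beta)c\tilde\lambda_m}{L}$. The second is absorbed by the $\|\mathbf{q}(t)-\mathbf{q}(t+1)\|_G^2$ term on the right of the telescoping bound, since that quantity contains $\|\mathbf{x}(t+1)-\mathbf{x}(t)\|_{M-A'D^{-1}A}^2$. Reinjecting the $\tfrac{2}{\tilde\lambda_m}$-enlarged $\lambda_M$ factor into the $\mathbf{x}$-block requirement gives the sharpened bound $\delta\le\tfrac{2\beta\nu}{c\lambda_M(1+2/\tilde\lambda_m)}$, matching the statement.

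Taking $\delta$ to be any value below the minimum of the two thresholds yields $\|\mathbf{q}(t+1)-\mathbf{q}^*\|_G^2\le \tfrac{1}{1+\delta}\|\mathbf{q}(t)-\mathbf{q}^*\|_G^2$; iterating and finally dropping the $\mathbf{r}$-part together with $\|\cdot\|_G\le\|\cdot\|_2$ (valid once $M-A'D^{-1}A\preceq I$, or by rescaling $c$) gives the advertised decay. The main obstacle I expect is Step~3's book-keeping: choosing the right convex split with $\beta$, using the sharp Lipschitz inequality of Lemma~\ref{lem:strongconvexandlipschitz}(b) rather than the crude $L^2$-bound, and carefully routing the cross term $(M-A'D^{-1}A)(\mathbf{x}(t+1)-\mathbf{x}(t))$ into the right-hand side quadratic so that the $(1+2/\tilde\lambda_m)$ factor appears in exactly the position shown, without wasting a constant factor on either side of the eventual min.
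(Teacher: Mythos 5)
Your skeleton matches the paper's (a $G$-norm contraction built from the recursion $Q(\mathbf{r}(t+1)-\mathbf{r}^*)=-\tfrac{1}{c}(\nabla F(\mathbf{x}(t+1))-\nabla F(\mathbf{x}^*))-(M-A'D^{-1}A)(\mathbf{x}(t+1)-\mathbf{x}(t))$, a $\beta$-split of a convexity surplus, and spectral bounds via $\tilde\lambda_m,\lambda_M$), and your derivation of that recursion is correct. The genuine gap is in the absorption of the gradient term of the dual block. You start from Proposition \ref{pro:telescopic}, whose left side (with $\mathbf{r}=\mathbf{r}^*$) is a Bregman divergence, and you retain from it only the strong-convexity surplus $\tfrac{\nu}{c}\|\mathbf{x}(t+1)-\mathbf{x}^*\|_2^2$. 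Then, in the $\mathbf{r}$-block, you invoke Lemma \ref{lem:strongconvexandlipschitz}(b) in the wrong direction: you replace $\tfrac{2}{c^2\tilde\lambda_m}\|\nabla F(\mathbf{x}(t+1))-\nabla F(\mathbf{x}^*)\|_2^2$ by $\tfrac{2L}{c^2\tilde\lambda_m}(\nabla F(\mathbf{x}(t+1))-\nabla F(\mathbf{x}^*))'(\mathbf{x}(t+1)-\mathbf{x}^*)$ and then ``absorb'' this inner product by matching coefficients against $\tfrac{2(1-\beta)}{c}$. But nothing you kept on the favorable side carries that inner product: your surplus is $\tfrac{(1-\beta)\nu}{c}\|\mathbf{x}(t+1)-\mathbf{x}^*\|_2^2$, while the inner product can be as large as $L\|\mathbf{x}(t+1)-\mathbf{x}^*\|_2^2=\kappa_f\,\nu\|\mathbf{x}(t+1)-\mathbf{x}^*\|_2^2$. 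Carried out correctly, your route forces $\delta\lesssim\tfrac{(1-\beta)c\tilde\lambda_m\nu}{L^2}$, a loss of a factor $\kappa_f$ in the second threshold; after optimizing over $c$ this yields an iteration complexity of order $\kappa_f$, not $\sqrt{\kappa_f}$, and it does not establish the stated bound on $\delta$.

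The paper's proof avoids exactly this trap, and the difference is the ingredient you are missing. Instead of the function-value inequality, it uses the monotonicity identity: by Lemma \ref{lem:recursion} and the three-point identity, $\tfrac{2}{c}(\mathbf{x}(t+1)-\mathbf{x}^*)'(\nabla F(\mathbf{x}(t+1))-\nabla F(\mathbf{x}^*))$ \emph{equals} $\|\mathbf{q}(t)-\mathbf{q}^*\|_G^2-\|\mathbf{q}(t+1)-\mathbf{q}^*\|_G^2-\|\mathbf{q}(t)-\mathbf{q}(t+1)\|_G^2$, and this single quantity is then lower-bounded \emph{twice}, by $\tfrac{2\nu}{c}\|\mathbf{x}(t+1)-\mathbf{x}^*\|_2^2$ (part (a)) and by $\tfrac{2}{cL}\|\nabla F(\mathbf{x}(t+1))-\nabla F(\mathbf{x}^*)\|_2^2$ (part (b)), keeping the $\beta$-weighted combination of both. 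The retained squared-gradient surplus is precisely what cancels the $\tfrac{2\delta}{c^2\tilde\lambda_m}\|\nabla F(\mathbf{x}(t+1))-\nabla F(\mathbf{x}^*)\|_2^2$ term of the dual block, giving $\delta\le\tfrac{(1-\beta)c\tilde\lambda_m}{L}$ with no $\kappa_f$ loss (even a repaired Bregman route, using $D_F\ge\tfrac{1}{2L}\|\nabla F(\mathbf{x}(t+1))-\nabla F(\mathbf{x}^*)\|_2^2$, only recovers half the stated thresholds). Secondary problems: the cross term should carry a $\lambda_M$, since $\|(M-A'D^{-1}A)v\|_2^2\le\lambda_M\|v\|_{M-A'D^{-1}A}^2$, and routing it into $\|\mathbf{q}(t)-\mathbf{q}(t+1)\|_G^2$ imposes the extra condition $2\delta\lambda_M/\tilde\lambda_m\le 1$, so your ``reinjection'' that produces the factor $(1+2/\tilde\lambda_m)$ and doubles the numerator to $2\beta\nu$ is an assertion, not a derivation; defining $\mathbf{r}^*$ by ``taking $t\to\infty$'' is circular, whereas the paper constructs it from $\mathbf{1}'\nabla F(\mathbf{x}^*)=0$ and $\mathrm{range}(Q)\oplus\mathrm{null}(Q)=\mathbb{R}^n$; and the final passage to $\|\mathbf{x}(t)-\mathbf{x}^*\|_2^2$ cannot rest on $\|\cdot\|_G\le\|\cdot\|_2$ (false in general, and $G$ does not depend on $c$) --- it should come from one last application of the strong-convexity surplus, as in the paper.
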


The rate of convergence in Theorem \ref{thm:linearconvergence} holds for any choice of penalty parameter $c >0$. In other words, for any choice of $c > 0$, the convergence rate is linear. We now optimize the rate of convergence over all choices of $c$ and provide an explicit convergence rate estimate that highlights dependence on the condition number of the problem. 
\begin{theorem} \label{thm:optimalstepsize}
Suppose Assumptions \autoref{assump:matrixA}, \autoref{assump:nonempty-set}, and  \autoref{assump:strnogconvnlipschittzgard} hold. Let $\{\mathbf{x}(t)\}_{t=1}^{\infty}$ be the sequence generated by Algorithm \autoref{alg:ADMM}. There exist $c> 0$ for which we have 
\begin{align*}
||\mathbf{x}(t)- \mathbf{x}^*||_2^2  \le   \rho ^{t} ||\mathbf{q}(0)- \mathbf{q}^*||_2^2,
\end{align*}
where the rate $\rho <1$ is given by \[\rho=  \left(1+ \frac{1}{2} \sqrt{\frac{2 \tilde{\lambda}_m^2}{ \lambda_M (2 + \tilde{\lambda}_m)}\frac{1}{\kappa_f}} \right)^{-1}.\] 
\end{theorem}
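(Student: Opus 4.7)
The plan is to optimize the lower bound on $\delta$ provided by Theorem \ref{thm:linearconvergence} jointly over the penalty parameter $c>0$ and the free parameter $\beta \in (0,1)$, and then translate the maximal $\delta$ back into the rate $\rho = (1+\delta)^{-1}$.

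The starting point is the expression
\[
\delta \le \min\!\left\{ \frac{2\beta \nu}{c\,\lambda_M\bigl(1+\tfrac{2}{\tilde{\lambda}_m}\bigr)},\ \frac{(1-\beta)c\,\tilde{\lambda}_m}{L}\right\}.
\]
The first argument of the min is strictly decreasing in $c$ while the second is strictly increasing in $c$, so for fixed $\beta$ the maximum over $c>0$ is attained where the two coincide. Setting them equal and using $\tilde{\lambda}_m\bigl(1+\tfrac{2}{\tilde{\lambda}_m}\bigr) = \tilde{\lambda}_m+2$ yields
\[
c^{\ast}(\beta)^2 \;=\; \frac{2\beta\,\nu\,L}{(1-\beta)\,\lambda_M\,(\tilde{\lambda}_m+2)}.
\]
Plugging $c^\ast(\beta)$ back into either branch and using $\kappa_f = L/\nu$ gives the common value
\[
\delta(\beta) \;=\; \tilde{\lambda}_m \sqrt{\frac{2\,\beta(1-\beta)}{\kappa_f\,\lambda_M\,(\tilde{\lambda}_m+2)}}.
\]

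It remains to maximize $\delta(\beta)$ over $\beta\in(0,1)$. The only $\beta$-dependence is through $\beta(1-\beta)$, which is maximized at $\beta = 1/2$ with value $1/4$. Substituting,
\[
\delta^{\ast} \;=\; \tilde{\lambda}_m\sqrt{\frac{1}{2\,\kappa_f\,\lambda_M\,(\tilde{\lambda}_m+2)}} \;=\; \frac{1}{2}\sqrt{\frac{2\,\tilde{\lambda}_m^{2}}{\lambda_M\,(2+\tilde{\lambda}_m)}\,\frac{1}{\kappa_f}},
\]
and the corresponding choice of penalty parameter is $c = c^{\ast}(1/2) = \sqrt{\nu L/\bigl(\lambda_M(\tilde{\lambda}_m+2)\bigr)}$. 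Inserting $\delta^\ast$ into Theorem \ref{thm:linearconvergence} and noting that $\|\mathbf{x}(t)-\mathbf{x}^\ast\|_2^2 \le \|\mathbf{q}(t)-\mathbf{q}^\ast\|_G^2 / \min\text{-eig}(G)$ can be bounded by $\|\mathbf{q}(0)-\mathbf{q}^\ast\|_2^2$ after absorbing eigenvalues (or more directly, the statement is recorded in terms of the Euclidean norm of the initial error as in the theorem) produces precisely
\[
\rho \;=\; \frac{1}{1+\delta^{\ast}} \;=\; \left(1 + \frac{1}{2}\sqrt{\frac{2\,\tilde{\lambda}_m^{2}}{\lambda_M(2+\tilde{\lambda}_m)}\,\frac{1}{\kappa_f}}\right)^{-1}.
\]

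There is no serious conceptual obstacle; the argument is a straightforward two-variable optimization of the bound in Theorem \ref{thm:linearconvergence}. The only delicate points are (i) verifying monotonicity in $c$ of each branch so that equalization is indeed optimal, and (ii) keeping the algebra clean when eliminating $c$ and collecting the factor of $\kappa_f = L/\nu$; both are routine.
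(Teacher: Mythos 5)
Your proof is correct and is essentially the paper's own argument: both maximize the bound on $\delta$ from Theorem \ref{thm:linearconvergence} by equalizing the two branches of the $\min$ and then optimizing the remaining free parameter, the only difference being the order of elimination (the paper solves for $\beta^*(c)$ for fixed $c$ and then optimizes over $c$, whereas you solve for $c^*(\beta)$ for fixed $\beta$ and then optimize over $\beta$), and both orders yield the same $\delta^*=\frac{1}{2}\sqrt{\frac{2\tilde{\lambda}_m^2}{\lambda_M(2+\tilde{\lambda}_m)}\frac{1}{\kappa_f}}$ and hence the same $\rho=(1+\delta^*)^{-1}$. One immaterial slip: by your own formula, $c^*(1/2)=\sqrt{2\nu L/\bigl(\lambda_M(\tilde{\lambda}_m+2)\bigr)}$ rather than $\sqrt{\nu L/\bigl(\lambda_M(\tilde{\lambda}_m+2)\bigr)}$ (a dropped factor of $2$), but since the theorem only asserts existence of some $c>0$ and your expression for $\delta(\beta)$ is correct, the conclusion is unaffected.
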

\begin{remark}
\textup{
This result shows that within $O\left( \sqrt{\kappa_f} \log \left( \frac{1}{\epsilon} \right) \right)$ iterations, the estimates $\{\mathbf{x}(t)\}$ reach an $\epsilon$- neighborhood of the optimal solution. Our rate estimate has a $\sqrt{\kappa_f}$ dependence which improves on the linear condition number dependence provided in the convergence analysis of edge-based ADMM in \cite{deng2012global}. The network dependence in our rate estimates is captured through $\tilde{\lambda}_m$ and $\lambda_M$. In particular, the larger $\tilde{\lambda}_m$ and the smaller $\lambda_M$ results in a faster rate of convergence. In Section \eqref{sec:numericalresults} we will explicitly show the network effect in the convergence rate and provide numerical results that illustrate the performance for networks with different connectivity properties. 
}
\end{remark}
\section{Network Effects}\label{sec:numericalresults}
We can choose communication matrix $P$ (and the corresponding matrix $A$) in the Algorithm \ref{alg:ADMM} to be any matrix that satisfies Assumption \eqref{assump:matrixA}. One natural choice for the matrix $P$ is the Laplacian of the graph which  leads to having $A_{ij}=A_{ji}=-1$ for all $j \in N(i) \setminus \{i\}$ and $A_{ii}=d_i$. Using Laplacian as the communication matrix we can now capture the effect of network structure in the convergence rate.

\subsection{Network Effect in Sub-linear Rate}
The following proposition explicitly show the networks dependence in the bounds provided in \autoref{thm:sublinearnetworkdependence}. 
\begin{proposition}\label{pro:sublinearexplicitnetworkdep}
For any $T$, starting form $\mathbf{x}(0)=0$ and using standard Laplacian as the communication matrix, we have  
\begin{align*}
 | F(\hat{\mathbf{x}}(T)) - F(\mathbf{x}^*)| \le & \frac{c}{2T} ||\mathbf{x}^*||_2^2 \left( 4 d_{\text{max}}^2 \right) +  \frac{2}{c T} U^2 \frac{2 d_{\text{max}}}{a(G)^2}, 
\end{align*}
and 
\begin{align*}
   ||Q \hat{\mathbf{x}}(T)|| \le  \frac{1}{2T} ||\mathbf{x}^*||_2^2 \left( 4 d_{\text{max}}^2 \right)   + \frac{1}{2T} \left( 2+ \frac{2 U^2}{ c^2} \frac{2 d_{\text{max}}}{a(G)^2} \right),
\end{align*}
where $U$ is a bound on the subgradients of the function $F$ at $\mathbf{x}^*$, i.e., $\|\mathbf{v}\|\le U$ for all $\mathbf{v}\in \partial F(\mathbf{x}^*)$ and $a(G)$ is the algebraic connectivity of the graph. 
\end{proposition}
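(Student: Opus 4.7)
The plan is to deduce Proposition~\ref{pro:sublinearexplicitnetworkdep} directly from Theorem~\ref{thm:sublinearnetworkdependence} by providing explicit Laplacian-specific bounds on the two spectral quantities that appear in that theorem: an upper bound $\lambda_M \le 4 d_{\text{max}}^2$ and a lower bound $\tilde{\lambda}_m \ge a(G)^2/(2 d_{\text{max}})$. Once these are in hand, direct substitution in the two inequalities of Theorem~\ref{thm:sublinearnetworkdependence} (using $1/\tilde{\lambda}_m \le 2 d_{\text{max}}/a(G)^2$) yields precisely the claimed bounds.

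For the upper bound on $\lambda_M$, I would first compute $M$ explicitly under the Laplacian choice of $P$. With $d=1$ we have $A_{ii}=d_i$ and $A_{ji}=-1$ for $j\in N(i)\setminus\{i\}$, so the diagonal entry $M_{ii}=\sum_{j\in N(i)} A_{ji}^2$ equals $d_i^2+d_i = d_i(d_i+1)$. Since $A'D^{-1}A\succeq 0$ (Lemma~\ref{lem:PSD}), the Loewner ordering gives $M-A'D^{-1}A \preceq M$, so
\[
\lambda_M \;\le\; \lambda_{\max}(M) \;=\; d_{\text{max}}(d_{\text{max}}+1) \;\le\; 4 d_{\text{max}}^2,
\]
where the last inequality uses $d_{\text{max}}+1 \le 2 d_{\text{max}}$ (valid whenever $d_{\text{max}}\ge 1$).

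For the lower bound on $\tilde{\lambda}_m$, I would exploit that $D\preceq (d_{\text{max}}+1)I$, so $D^{-1}\succeq (d_{\text{max}}+1)^{-1} I$. Combined with the symmetry of the Laplacian (so $A=A'=P$ when $d=1$), this yields
\[
A'D^{-1}A \;\succeq\; \tfrac{1}{d_{\text{max}}+1}\, P^2 .
\]
Both $A'D^{-1}A$ and $P^2$ share the null space $\text{span}\{\mathbf{1}\}$: the former by Assumption~\ref{assump:matrixA}, the latter because $P\mathbf{1}=0$. Applying the Courant--Fischer characterization of the smallest eigenvalue restricted to the orthogonal complement of this common null space then gives $\tilde{\lambda}_m \ge a(G)^2/(d_{\text{max}}+1) \ge a(G)^2/(2 d_{\text{max}})$, using that the smallest nonzero eigenvalue of $P^2$ is the square of the algebraic connectivity $a(G)$.

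With both spectral bounds established, the two inequalities of Theorem~\ref{thm:sublinearnetworkdependence} specialize immediately to the statement of Proposition~\ref{pro:sublinearexplicitnetworkdep}. I do not foresee a substantial obstacle here; the main subtlety is the restricted eigenvalue comparison in the second step, where one must be careful to work on the orthogonal complement of $\mathbf{1}$ because both matrices involved are singular along that direction. Everything else is either a direct algebraic computation or a standard spectral inequality.
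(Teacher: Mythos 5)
Your proposal is correct and takes essentially the same route as the paper: specialize Theorem~\ref{thm:sublinearnetworkdependence} to the Laplacian by bounding $\lambda_M$ above by $4d_{\text{max}}^2$ and $1/\tilde{\lambda}_m$ above by $2d_{\text{max}}/a(G)^2$, then substitute. Your justifications of the two spectral bounds are, if anything, cleaner and more complete than the paper's. For $\lambda_M$, the paper uses the bound $\|M - A'D^{-1}A\|_2 \le d_{\text{max}}(d_{\text{max}}+1) + \frac{4 d_{\text{max}}^2}{d_{\text{min}}+1}$ (in effect a triangle inequality that ignores the sign of $A'D^{-1}A$) and then needs $d_{\text{min}} \ge 1$ to reach $4 d_{\text{max}}^2$; your Loewner-ordering argument $M - A'D^{-1}A \preceq M$ gives the tighter $\lambda_M \le d_{\text{max}}(d_{\text{max}}+1) \le 2 d_{\text{max}}^2$ directly, needing only $d_{\text{max}} \ge 1$. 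For $\tilde{\lambda}_m$, the paper simply asserts $\tilde{\lambda}_m \ge a(G)^2/(d_{\text{max}}+1)$, whereas you actually derive it from $D^{-1} \succeq (d_{\text{max}}+1)^{-1} I$ together with the restricted Courant--Fischer comparison on the orthogonal complement of $\mathbf{1}$, correctly flagging that comparing smallest \emph{nonzero} eigenvalues is legitimate here only because $A'D^{-1}A$ and $P^2$ share the null space $\text{span}\{\mathbf{1}\}$ -- a point the paper glosses over entirely.
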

Therefore, highly connected graphs with larger algebraic connectivity has a faster convergence rate  (see e.g. \cite{chung1997spectral}, \cite{fiedler1973algebraic} for an overview of the results on algebraic connectivity). 
\subsection{Network Effect in Linear Rate}
The following proposition  explicitly show the networks dependence in the bound provided in \autoref{thm:optimalstepsize}. 
\begin{proposition}\label{pro:linearexplicitnetworkdep}
Suppose Assumptions \autoref{assump:matrixA}, \autoref{assump:nonempty-set}, and  \autoref{assump:strnogconvnlipschittzgard} hold. Using standard Laplacian as the communication matrix, in order to reach an $\epsilon$-optimal solution $O\left(\sqrt{\kappa_f} \sqrt{ \frac{d^4_{\text{max}}}{d_{\text{min}}a^2(G) } } \log \left( \frac{1}{\epsilon} \right) \right)$ iterations suffice. 
\end{proposition}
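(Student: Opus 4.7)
My plan is to convert the linear contraction rate from Theorem \ref{thm:optimalstepsize} into an explicit iteration count and then bound the spectral quantities $\tilde{\lambda}_m$ and $\lambda_M$ in terms of graph-theoretic parameters, exploiting the special structure of $P = L(G)$.

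First, I would derive the iteration count from $\rho$. Writing $\rho = (1+\delta)^{-1}$ with $\delta = \tfrac{1}{2}\sqrt{2\tilde{\lambda}_m^2/(\lambda_M(2+\tilde{\lambda}_m)\kappa_f)}$, the elementary inequality $\log(1+\delta) \ge \delta/(1+\delta)$ shows that the contraction $\rho^t \le \epsilon$ is implied by
\[ t \;\ge\; \frac{1+\delta}{\delta}\,\log(1/\epsilon) \;=\; O\!\Bigl(\sqrt{\kappa_f\,\lambda_M(2+\tilde{\lambda}_m)/\tilde{\lambda}_m^2}\;\log(1/\epsilon)\Bigr). \]
The task therefore reduces to giving graph-theoretic upper/lower bounds on $\lambda_M$ and $\tilde{\lambda}_m$.

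Second, I would specialize to the Laplacian communication matrix. With $A = L(G)\otimes I_d$ and $D_{ii} = d_i+1$, the diagonal matrix $M$ satisfies $M_{ii} = \sum_{j\in N(i)} P_{ji}^2 = d_i^2 + d_i = d_i(d_i+1)$, so $\lambda_{\max}(M)\le d_{\max}(d_{\max}+1)$. Since $M-A'D^{-1}A$ is positive semidefinite (it is the $(2,2)$-block of the matrix $G$ used throughout the ADMM analysis), one immediately obtains $\lambda_M \le \lambda_{\max}(M)= O(d_{\max}^2)$. For the lower bound on $\tilde{\lambda}_m$, I would use that for any $v \perp \mathbf{1}$,
\[ v' L D^{-1} L v \;=\; (Lv)' D^{-1}(Lv) \;\ge\; \frac{\|Lv\|^2}{d_{\max}+1} \;\ge\; \frac{a(G)^2\|v\|^2}{d_{\max}+1}, \]
which gives $\tilde{\lambda}_m \ge a(G)^2/(d_{\max}+1)$. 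In the regime where $\tilde{\lambda}_m \le 1$, the factor $(2+\tilde{\lambda}_m)$ is simply $O(1)$; otherwise one uses the crude bound $\tilde{\lambda}_m \le \lambda_{\max}(LD^{-1}L) \le \lambda_{\max}(L)^2/(d_{\min}+1) = O(d_{\max}^2/d_{\min})$.

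Third, combining these estimates in the formula above yields an iteration complexity of the advertised order $\sqrt{\kappa_f}\cdot \sqrt{d_{\max}^4/(d_{\min}a^2(G))}\,\log(1/\epsilon)$, with the $d_{\min}$ dependence emerging from the upper bound on $(2+\tilde{\lambda}_m)$ and a careful case split between the regimes $\tilde{\lambda}_m\le 1$ and $\tilde{\lambda}_m > 1$. The main obstacle I anticipate is pinning down the precise exponent of $a(G)$ versus $d_{\min}$: the naive combination of the lower bound $\tilde{\lambda}_m\ge a(G)^2/(d_{\max}+1)$ with $\lambda_M = O(d_{\max}^2)$ gives $d_{\max}^4/a(G)^4$ under the square root, so obtaining the claimed $d_{\max}^4/(d_{\min} a^2(G))$ requires trading part of the $1/a(G)^2$ factor against $1/d_{\min}$ via the secondary bound on $(2+\tilde{\lambda}_m)$, and checking that both regimes for $\tilde{\lambda}_m$ produce the same final estimate.
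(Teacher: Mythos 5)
Your first two steps are correct and essentially coincide with the paper's proof: the paper likewise converts the rate of Theorem \ref{thm:optimalstepsize} into an iteration count of order $\frac{1}{\delta^*}\log(1/\epsilon)$ with $\frac{1}{\delta^*}=O\bigl(\sqrt{\kappa_f\,\lambda_M(2+\tilde{\lambda}_m)/\tilde{\lambda}_m^2}\bigr)$, and it uses the same spectral estimates $\tilde{\lambda}_m\ge a(G)^2/(d_{\text{max}}+1)$ and $\lambda_M\le d_{\text{max}}(d_{\text{max}}+1)+\lambda_{\text{max}}(A)^2/(d_{\text{min}}+1)$ with $\lambda_{\text{max}}(A)\le 2d_{\text{max}}$. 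Your bound $\lambda_M\le\lambda_{\max}(M)=d_{\text{max}}(d_{\text{max}}+1)$ is in fact cleaner (and your $\log(1+\delta)\ge\delta/(1+\delta)$ is the correct direction for a sufficiency claim, whereas the paper's $1/\log(1+\delta^*)\ge 1/\delta^*$ points the wrong way), but note a small slip: what you need is positive semidefiniteness of $A'D^{-1}A$, so that $M-A'D^{-1}A\preceq M$, not positive semidefiniteness of $M-A'D^{-1}A$ itself; both facts are in Lemma \ref{lem:PSD}.

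The genuine gap is your third step, and you flagged it yourself. In the regime $\tilde{\lambda}_m\le 1$ your estimates give $\lambda_M(2+\tilde{\lambda}_m)/\tilde{\lambda}_m^2=O(d_{\text{max}}^4/a(G)^4)$, and passing from $1/a(G)^4$ to $1/(d_{\text{min}}\,a(G)^2)$ requires $d_{\text{min}}=O(a(G)^2)$, which is false in general: on a path, $d_{\text{min}}=1$ while $a(G)=\Theta(n^{-2})$, and there $\lambda_M=\Theta(1)$ and $\tilde{\lambda}_m=\Theta(a(G)^2)$, so the quantity genuinely scales as $1/a(G)^4$; no case split on $\tilde{\lambda}_m$ can repair this. (Your secondary bound $2+\tilde{\lambda}_m=O(d_{\text{max}}^2/d_{\text{min}})$ is also unnecessary in the other regime: if $\tilde{\lambda}_m>1$ then $2+\tilde{\lambda}_m\le 3\tilde{\lambda}_m$ gives $3\lambda_M/\tilde{\lambda}_m=O(d_{\text{max}}^3/a(G)^2)\le O(d_{\text{max}}^4/(d_{\text{min}}a(G)^2))$ directly.) You should also know that the paper's own proof jumps over exactly this point: it asserts $\lambda_M(2+\tilde{\lambda}_m)/\tilde{\lambda}_m^2\le 16\, d_{\text{max}}^4/(d_{\text{min}}a^2(G))$ from the same three bounds, which amounts to keeping only the term generated by $\tilde{\lambda}_m$ inside the factor $(2+\tilde{\lambda}_m)$ (that term is of the claimed order once one uses $\tilde{\lambda}_m\le a(G)^2/(d_{\text{min}}+1)$) while discarding the term generated by the constant $2$, which is of order $d_{\text{max}}^4/a(G)^4$ and dominates unless $a(G)^2=\Omega(d_{\text{min}})$. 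What the available estimates actually prove is an iteration complexity
\[
O\left(\sqrt{\kappa_f}\left(\frac{d^2_{\text{max}}}{a^2(G)}+\sqrt{\frac{d^4_{\text{max}}}{d_{\text{min}}\,a^2(G)}}\right)\log\left(\frac{1}{\epsilon}\right)\right),
\]
and the proposition as stated follows only under an additional connectivity hypothesis such as $a(G)^2=\Omega(d_{\text{min}})$ (satisfied by good expanders and by the regular graphs in the paper's example). In short, your plan is the paper's plan, and the step you identified as the main obstacle cannot be closed as stated—either in your sketch or in the paper.
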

Both of our guaranteed rates for sub-linear and linear rates depends on three parameters $d_{\text{max}}$, $d_{\text{min}}$ and $a(G)$. The convergence rate is faster for larger  $d_{\text{min}}$ and smaller $d_{\text{max}}$. Finally, the convergence rate is faster for larger algebraic connectivity $a(G)$.
\begin{example}
\textup{
To provide more intuition on the networks  dependence, we focus on $d$-regular graphs with matrix $P$ equal to Laplacian of the graph. In this setting, we have: $\tilde{\lambda}_m=\frac{a(G)^2}{d+1}$ and $\lambda_M=d(d+ 1)$, where $a(G)$ is the algebraic connectivity of the graph. Thus in this case the iteration complexity is $O\left(\sqrt{\kappa_f} \sqrt{ \frac{d^3}{a^2(G) }} \log \left( \frac{1}{\epsilon} \right) \right)$ (note that this bound matches the one provided in Proposition \autoref{pro:linearexplicitnetworkdep}). For $d$-regular graphs there exist good expanders such as Ramanujan graphs for which $a(G)=O(d)$ (see e.g. \cite{de2007old}). 
 In \autoref{fig:regularcompare}, we compare the performance of our algorithm for several regular graphs. The choice of function is $F(x)=\frac{1}{2} \sum_{i=1}^n (x-a_i)^2$ where $a_i$ is a scalar that is known only to machine $i$ (where $a_i=i$ for $i=1, \dots, n$). The communication matrix used in these experiments is the Laplacian of the graph. This problem appears in distributed estimation where the goal is to estimate the parameter $x^*$, using local measurements $a_i=x^*+N_i$ at each machine $i=1, \dots, n$. Here $N_i$ represents measurements noise, which we assume to be jointly Gaussian with mean zero and variance one. The maximum likelihood estimate is the minimizer $x^*$ of $F(x)$. 
 }
 \end{example}
\begin{figure}[t]
\centering
  \includegraphics[width=.9 \linewidth]{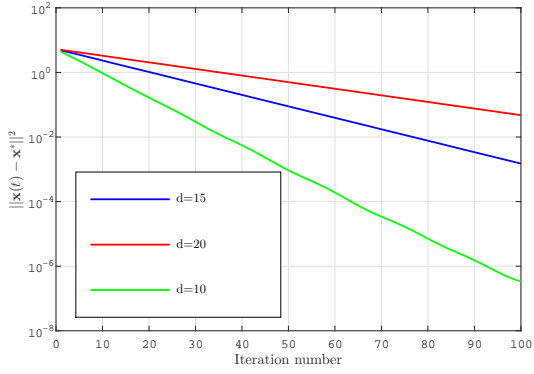}
  \caption{Performance of Algorithm \autoref{alg:ADMM} for three $d$-regular graphs with $d=10, 20, 30$. The $y$ axis is logarithmic to show the linear convergence rate.}
  \label{fig:regularcompare}
\end{figure}
\section{conclusion}\label{sec:conclusion}
We proposed a novel distributed algorithm based on Alternating  Direction Method of Multipliers (ADMM) to minimize the sum of locally known convex functions. We first showed that ADMM can be implemented by only keeping track of some node-based variables. We then showed that our algorithm reaches $\epsilon$-optimal solution in $O \left( \frac{1}{\epsilon} \right)$ number of iterations for convex functions and in $\left( \sqrt{\kappa_f} \log\left(\frac{1}{\epsilon}\right) \right)$ iterations for strongly convex and Lipschitz functions. Our analysis shows that the performance of our algorithm depends on the algebraic connectivity of the graph, the minimum degree of the nodes, and the maximum degree of the nodes. Finally, we illustrated the performance of our algorithm with numerical examples. 

\section*{Appendix}
\subsection{\textup{\textbf{Proof of Lemma \autoref{lem:constraint}}}}
Consider the $k$-th coordinate of all $x_i$'s and form a $n \times 1$ vector $\mathbf{x}^k$. From $A\mathbf{x}=0$ and the fact that $A= P \otimes I$, we obtain that $P \mathbf{x}^k=0$. This shows that $\mathbf{x}^k \in \text{null}(P)$. Using Assumption \ref{assump:matrixA}, we have $\mathbf{x}^k \in \text{span}(\{\mathbf{1}\})$, which guarantees that all entries of $\mathbf{x}^k$ are equal. Similarly, for any $k=1, \dots, d$, the $k$-th entries of all $x_i$'s are equal. This leads to $x_i=x_j$ for all $i, j \in V$. 
\subsection{\textup{\textbf{Proof of Lemma \autoref{lem:purturbed}}}}

Using the first step of Algorithm \eqref{alg:ADMM}  for $i$, we can write $x_i(t+1)$ as  
\begin{align}\label{eq:new1}
& h_i (\mathbf{x}(t+1))+ \sum_{j \in N(i)} A_{ji} p_{j}(t) \nonumber \\
& + c  \sum_{j \in N(i)} A_{ji} ( y_j(t)+ A_{ji}x_i(t+1)- A_{ji} x_i(t) )=0,
\end{align}
where $h_i (\mathbf{x}(t+1))=f'_i(x_i(t+1))$ for differentiable functions and $h_i (\mathbf{x}(t+1)) \in \partial f_i(x_i(t+1))$ in general. 
We next use second and third steps of Algorithm \eqref{alg:ADMM} to write $p_i(t)$ and $y_i(t)$ in terms of $(\mathbf{x}(0), \dots, \mathbf{x}(t))$. 
Using the update for $j$, we have
\begin{align}\label{eq:new2}
& \sum_{j \in N(i)} p_j(t) A_{ji}= \sum_{j \in N(i)} A_{ji} \sum_{s=0}^t \frac{c}{d_i+1} \sum_{s=0}^t ([A]^j)' \mathbf{x}(s)\nonumber\\
& = c  \sum_{s=0}^t [A' D^{-1} A x(s)]_i. 
\end{align}
Moreover, we can write the term $\sum_{j \in N(i)} A_{ji} y_{j}(t)$ based on the sequence $(\mathbf{x}(0), \dots, \mathbf{x}(t))$ as follows
\begin{align}\label{eq:new3}
\sum_{j \in N(i)} A_{ji} y_{j}(t) &= \sum_{j \in N(i)} A_{ji} \frac{1}{d_j+1} ([A]^j)'\mathbf{x}(t)\nonumber\\
& = [A' D^{-1} A \mathbf{x}(t)]_i.
\end{align}
Substituting \eqref{eq:new2} and \eqref{eq:new3} in \eqref{eq:new1}, we can write the update of $x_{i}(t+1)$ in terms of the sequence $(\mathbf{x}(0), \dots, \mathbf{x}(t))$, which then can compactly be written as 
\begin{align*}
c M \mathbf{x}(t+1) & = - h(\mathbf{x}(t+1)) + c \left( M - A' D^{-1} A \right) \mathbf{x}(t) \\
& - c (A' D^{-1} A) \sum_{s=0}^t \mathbf{x}(s),
\end{align*}
where $h(\mathbf{x}(t+1)) = \nabla F(\mathbf{x}(t+1))$ if the functions are differentiable and $h(\mathbf{x}(t+1)) \in \partial F(\mathbf{x}(t+1))$ in general. Left multiplying by $\frac{1}{c}M^{-1}$, completes the proof. 

\subsection{\textup{\textbf{Proof of Proposition \autoref{pro:telescopic}}}}
We first show a lemma that we will use in the proof of this proposition. The following lemma shows the relation between the auxiliary sequence $\mathbf{r}(t)$ and the primal sequence $\mathbf{x}(t)$. 

\begin{lemma}\label{lem:recursion}
 Suppose Assumptions \ref{assump:matrixA} and   \ref{assump:nonempty-set} hold. The sequence $\{\mathbf{x}(t), \mathbf{r}(t)\}_{t=0}^{\infty}$ satisfies 
\begin{align*}
& (M- A' D^{-1} A) (\mathbf{x}(t+1)- \mathbf{x}(t)) \nonumber \\
& = -Q \mathbf{r}(t+1)- \frac{1}{c} h(\mathbf{x}(t+1)),
\end{align*}
for some $h(\mathbf{x}(t+1)) \in \partial F(\mathbf{x}(t+1))$. 
\end{lemma}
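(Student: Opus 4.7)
The plan is to derive the claimed identity directly from the compact update formula in Lemma \ref{lem:purturbed}, using the definition $Q=(A'D^{-1}A)^{1/2}$ so that $Q^2 = A'D^{-1}A$ and hence $Q\mathbf{r}(t+1) = A'D^{-1}A\,\sum_{s=0}^{t+1}\mathbf{x}(s)$. The whole argument is algebraic rearrangement; strong convexity and Assumption \ref{assump:nonempty-set} are needed only to ensure the subgradient $h(\mathbf{x}(t+1))$ from Lemma \ref{lem:purturbed} exists along the sequence.

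First, I would clear the inverse in Lemma \ref{lem:purturbed} by left-multiplying both sides by $cM$, obtaining
\begin{align*}
cM\mathbf{x}(t+1) = -h(\mathbf{x}(t+1)) + c(M - A'D^{-1}A)\mathbf{x}(t) - c(A'D^{-1}A)\sum_{s=0}^{t}\mathbf{x}(s).
\end{align*}
Subtracting $cM\mathbf{x}(t)$ from both sides groups the natural ``telescoping'' difference on the left:
\begin{align*}
cM(\mathbf{x}(t+1)-\mathbf{x}(t)) = -h(\mathbf{x}(t+1)) - cA'D^{-1}A\,\mathbf{x}(t) - c(A'D^{-1}A)\sum_{s=0}^{t}\mathbf{x}(s).
\end{align*}

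Next I would move an $A'D^{-1}A(\mathbf{x}(t+1)-\mathbf{x}(t))$ term across so that the left-hand side becomes weighted by $M-A'D^{-1}A$. This converts the right-hand side's running sum from $\sum_{s=0}^{t}$ into $\sum_{s=0}^{t+1}$, since
\begin{align*}
-A'D^{-1}A\,\mathbf{x}(t) - A'D^{-1}A\sum_{s=0}^{t}\mathbf{x}(s) - A'D^{-1}A(\mathbf{x}(t+1)-\mathbf{x}(t)) = -A'D^{-1}A\sum_{s=0}^{t+1}\mathbf{x}(s).
\end{align*}
Thus
\begin{align*}
c(M-A'D^{-1}A)(\mathbf{x}(t+1)-\mathbf{x}(t)) = -h(\mathbf{x}(t+1)) - cA'D^{-1}A\sum_{s=0}^{t+1}\mathbf{x}(s).
\end{align*}

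Finally, since $Q^2 = A'D^{-1}A$ and $\mathbf{r}(t+1)=\sum_{s=0}^{t+1}Q\mathbf{x}(s) = Q\sum_{s=0}^{t+1}\mathbf{x}(s)$, we have $Q\mathbf{r}(t+1) = A'D^{-1}A\sum_{s=0}^{t+1}\mathbf{x}(s)$; dividing by $c$ yields the stated identity. The main (minor) obstacle is just the index bookkeeping in the middle step—the trick of adding and subtracting $A'D^{-1}A\,\mathbf{x}(t+1)$ to absorb the extra $\mathbf{x}(t+1)$ term into the running sum is what makes $\mathbf{r}(t+1)$ (rather than $\mathbf{r}(t)$) appear on the right.
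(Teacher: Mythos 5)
Your proof is correct and follows essentially the same route as the paper's: both start from Lemma \ref{lem:purturbed}, clear the $M^{-1}$, subtract the $\mathbf{x}(t)$ term, absorb $A'D^{-1}A\,\mathbf{x}(t+1)$ into the running sum to produce $\sum_{s=0}^{t+1}\mathbf{x}(s)$, and finish with $Q^2 = A'D^{-1}A$ so that the sum becomes $Q\mathbf{r}(t+1)$. The only cosmetic difference is that you carry the factor $c$ and divide at the end (and your aside attributing the subgradient's existence to strong convexity is unnecessary, since the lemma only assumes convexity); otherwise the arguments are identical.
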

\begin{proof}
Using Lemma \ref{lem:purturbed} we have 
\begin{align*}
& M(\mathbf{x}(t+1)- \mathbf{x}(t)) = - \frac{1}{c} h(\mathbf{x}(t+1)) \\
& - (A' D^{-1} A) \mathbf{x}(t) - (A' D^{-1} A) \sum_{s=0}^{t} \mathbf{x}(s).
\end{align*}
We subtract $(A' D^{-1} A) \mathbf{x}(t+1)$ from both sides and rearrange the terms to obtain 
\begin{align*}
& (M- A' D^{-1} A) (\mathbf{x}(t+1)- \mathbf{x}(t))\\
& = - A' D^{-1} A \sum_{s=0}^{t+1} \mathbf{x}(s)- \frac{1}{c} h(\mathbf{x}(t+1)).
\end{align*}
Using $QQ=A'D^{-1} A$, yields 
\begin{align*}
& (M- A' D^{-1} A) (\mathbf{x}(t+1)- \mathbf{x}(t))\\
& = - Q \mathbf{r}(t+1) - \frac{1}{c} h(\mathbf{x}(t+1)).
\end{align*}
 
\end{proof}
\textbf{Back to the proof of Proposition \autoref{pro:telescopic}:} Using Lemma \ref{lem:recursion} and Lemma \ref{lem:strongconvexandlipschitz} part (c), we have that 
\begin{align*}
& \frac{2}{c} \left( F(\mathbf{x}(t+1))- F(\mathbf{x}^*) \right) + 2 \mathbf{r}' Q \mathbf{x}(t+1) \\
& \le \frac{2}{c} (\mathbf{x}(t+1)- \mathbf{x}^*)' h(\mathbf{x}(t+1)) + 2 \mathbf{r}' Q \mathbf{x}(t+1)\\
& = 2 (\mathbf{x}(t+1)- \mathbf{x}^*)' (  -Q (\mathbf{r}(t+1)- \mathbf{r})\\
& - (M- A' D^{-1} A) (\mathbf{x}(t+1)- \mathbf{x}(t))) \\
&= 2 (\mathbf{r}(t+1)- \mathbf{r}(t))' \left(  -\mathbf{r}(t+1)+ \mathbf{r} \right)  \\
& - 2 (\mathbf{x}(t+1)- \mathbf{x}^*)' (M- A' D^{-1} A) (\mathbf{x}(t+1)- \mathbf{x}(t))\\
& = \left( ||\mathbf{r}(t)- \mathbf{r}^*||_2^2 - ||\mathbf{r}(t+1)- \mathbf{r}||_2^2 - ||\mathbf{r}(t)- \mathbf{r}(t+1)||_2^2\right) \\
& + ( ||\mathbf{x}(t)- \mathbf{x}^*||_{(M- A' D^{-1} A)}^2 - ||\mathbf{x}(t+1)- \mathbf{x}^*||_{(M- A' D^{-1} A)}^2 \\
& - ||\mathbf{x}(t)- \mathbf{x}(t+1)||_{(M- A' D^{-1} A)}^2 )\\
& = ||\mathbf{q}(t)- \mathbf{q}^*||_G^2 - ||\mathbf{q}(t+1)- \mathbf{q}^*||_G^2 - ||\mathbf{q}(t)- \mathbf{q}(t+1)||_G^2.
\end{align*}

\subsection{\textup{\textbf{Proof of \autoref{thm:sublinearrate}}}}
Taking summation of the relation given in Proposition \ref{pro:telescopic} from $t=0$ up to $t=T$, we obtain that 
\begin{align*}
\sum_{t=0}^T F(\mathbf{x}(t))- F(\mathbf{x}^*) + c \mathbf{r}' Q \mathbf{x}(t) \le \frac{c}{2}||\mathbf{q}(0)-\mathbf{q}||_G^2.
\end{align*}
Using convexity of the functions and Jensen's inequality, we obtain  
\begin{align*}
F(\hat{\mathbf{x}}(T))- F(\mathbf{x}^*) + c \mathbf{r}' Q \hat{\mathbf{x}}(T) \le \frac{c}{2T} ||\mathbf{q}(0)-\mathbf{q}||_G^2.
\end{align*} 
Letting $\mathbf{r}=0$, yields  
\begin{align}\label{eq:optimalityproof1}
&   F(\hat{\mathbf{x}}(T)) - F(\mathbf{x}^*)  \nonumber\\
& \le  \frac{c}{2T} \left(||\mathbf{x}(0)-\mathbf{x}^*||_{M- A' D^{-1} A}^2+ ||\mathbf{r}(0)||_2^2 \right).
\end{align}

From saddle point inequality, we have   
\begin{align}\label{eq:optimalityproof2}
F(\mathbf{x}^*) \le F(\hat{\mathbf{x}}(T)) + c {\hat{\mathbf{r}}}' Q \hat{\mathbf{x}}(T),
\end{align}
which implies 
\begin{align}\label{eq:optimalityproof3}
F(\mathbf{x}^*) - F(\hat{\mathbf{x}}(T)) \le  c {\hat{\mathbf{r}}}' Q \hat{\mathbf{x}}(T).
\end{align}
Next, we will bound the term ${\hat{\mathbf{r}}}' Q \hat{\mathbf{x}}(T)$. We add the term $ c {\hat{\mathbf{r}}}' Q \hat{\mathbf{x}}(T)$ to both sides of \eqref{eq:optimalityproof2} to obtain 
\begin{align}\label{eq:optimalityproof4}
c {\hat{\mathbf{r}}}' Q \hat{\mathbf{x}}(T) \le F(\hat{\mathbf{x}}(T)) - F(\mathbf{x}^*)  + 2 c {\hat{\mathbf{r}}}' Q \hat{\mathbf{x}}(T). 
\end{align}
Again, using Proposition \ref{pro:telescopic} to bound the right-hand side of \eqref{eq:optimalityproof4}, we obtain 
\begin{align}\label{eq:optimalityproof5}
& c {\mathbf{r}^*}' Q \hat{\mathbf{x}}(T) \le \frac{c}{2T} \left(||\mathbf{x}(0)-\mathbf{x}^*||_{M- A' D^{-1} A}^2+ ||\mathbf{r}(0)- 2 \hat{\mathbf{r}}||_2^2 \right).
\end{align}
Using \eqref{eq:optimalityproof5} to bound the right-hand side of \eqref{eq:optimalityproof3}, and then 
combining the result with  \eqref{eq:optimalityproof2}, we obtain 
\begin{align*}
& | F(\hat{\mathbf{x}}(T)) - F(\mathbf{x}^*)|  \le \frac{c}{2T} \left(||\mathbf{x}(0)-\mathbf{x}^*||_{M- A' D^{-1} A}^2 \right) \\
& + \frac{c}{2T} \left( \max\{ ||\mathbf{r}(0)- 2 \hat{\mathbf{r}}||_2^2, ||\mathbf{r}(0)||_2^2 \} \right).
\end{align*}
We next bound the feasibility violation. Using Proposition \ref{pro:telescopic} with $\mathbf{r}=\hat{\mathbf{r}}+ \frac{Q \hat{\mathbf{x}}(T)}{||Q \hat{\mathbf{x}}(T)||}$, we have 
\begin{align*}
&   F(\hat{\mathbf{x}}(T)) - F(\mathbf{x}^*) + c {\mathbf{r}^*}' Q \hat{\mathbf{x}}(T) + c ||Q \hat{\mathbf{x}}(T)|| \nonumber\\
& \le  \frac{c}{2T} \left(||\mathbf{x}(0)-\mathbf{x}^*||_{M- A' D^{-1} A}^2 \right) \\
& + \frac{c}{2T} \left( ||\mathbf{r}(0)-  \hat{\mathbf{r}} - \frac{Q \hat{\mathbf{x}}(T)}{||Q \hat{\mathbf{x}}(T)||} ||_2^2 \right).
\end{align*}
Since $(\mathbf{x}^*, \hat{\mathbf{r}})$ is a primal-dual optimal solution, using saddle point inequality, we have that 
\begin{align*}
F(\hat{\mathbf{x}}(T)) - F(\mathbf{x}^*) + c {\hat{\mathbf{r}}}' Q \hat{\mathbf{x}}(T) \ge 0.
\end{align*}
Combining the two previous relations, we obtain 
\begin{align*}
&   ||Q \hat{\mathbf{x}}(T)||_2  \le  \frac{1}{2T} \left(||\mathbf{x}(0)-\mathbf{x}^*||_{M- A' D^{-1} A}^2 \right) \\
& + \frac{1}{2T} \left( ||\mathbf{r}(0)-  \hat{\mathbf{r}}- \frac{Q \hat{\mathbf{x}}(T)}{||Q \hat{\mathbf{x}}(T)||} ||_2^2 \right).
\end{align*}
Since 
\begin{align*}
||\mathbf{r}(0)-  \hat{\mathbf{r}}- \frac{Q \hat{\mathbf{x}}(T)}{||Q \hat{\mathbf{x}}(T)||} ||_2^2  \le 2 ||\mathbf{r}(0)-  \hat{\mathbf{r}}||_2^2 + 2,
\end{align*} 
we can further bound $||Q \hat{\mathbf{x}}(T)||$ as  
\begin{align*}
&   ||Q \hat{\mathbf{x}}(T)||_2  \le  \frac{1}{2T} \left(||\mathbf{x}(0)-\mathbf{x}^*||_{M- A' D^{-1} A}^2 \right) \\
& + \frac{1}{2T} \left( 2||\mathbf{r}(0)-  \hat{\mathbf{r}}||_2^2+2 \right).
\end{align*}

\subsection{\textup{\textbf{Proof of \autoref{thm:sublinearnetworkdependence}}}}
We first show a lemma that bounds the norm of dual optimal solution of \eqref{eq:optlinearconstraintwithQ}.  
\begin{lemma}
{Let $\mathbf{x}^*$ be an optimal solution for problem \eqref{eq:optlinearconstraint}}. There exists an  optimal dual solution $\tilde{\mathbf{r}}$ for problem 
\begin{align}\label{eq:optlinearconstraintwithQ}
\min_{cQ \mathbf{x}=0} F(\mathbf{x}),
\end{align}
 that satisfies 
\begin{align*}
||\tilde{\mathbf{r}}||_2^2 \le \frac{U^2}{c^2 \tilde{\lambda}_m},
\end{align*}
where $U$ is a bound on the subgradients of the function $F$ at $\mathbf{x}^*$, i.e., $\|\mathbf{v}\|\le U$ for all $\mathbf{v}\in \partial F(\mathbf{x}^*)$, and 
$\tilde{\lambda}_m$ is the smallest non-zero eigen-value of $A' D^{-1} A$. 
\end{lemma}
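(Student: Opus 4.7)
The plan is to use KKT conditions for the reformulated problem \eqref{eq:optlinearconstraintwithQ} and then exploit a spectral bound coming from the fact that the dual variable can be chosen to lie in the range of $Q$. First I would write the Lagrangian of \eqref{eq:optlinearconstraintwithQ} as $L(\mathbf{x},\mathbf{r}) = F(\mathbf{x}) + c\,\mathbf{r}'Q\mathbf{x}$, and note that since $\text{null}(Q)=\text{null}(A'D^{-1}A)=\text{null}(A)$, the constraint $cQ\mathbf{x}=0$ has the same feasible set as $A\mathbf{x}=0$, so $\mathbf{x}^*$ from Assumption \ref{assump:nonempty-set} is also a primal optimum here. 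Standard convex duality (e.g.\ as the problem is a linearly constrained convex program with nonempty solution set) guarantees the existence of an optimal dual vector $\bar{\mathbf{r}}$ with
\[
\mathbf{0} \in \partial F(\mathbf{x}^*) + cQ\bar{\mathbf{r}},
\]
so there exists $\mathbf{v}^* \in \partial F(\mathbf{x}^*)$ with $cQ\bar{\mathbf{r}} = -\mathbf{v}^*$.

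Next, I would replace $\bar{\mathbf{r}}$ by the specific optimal dual obtained by orthogonally projecting out the null-space component. Concretely, let $\tilde{\mathbf{r}}$ be the projection of $\bar{\mathbf{r}}$ onto $\text{range}(Q) = \text{null}(Q)^{\perp}$. Since $Q$ is symmetric and $Q\mathbf{u} = 0$ for any $\mathbf{u}\in\text{null}(Q)$, we still have $cQ\tilde{\mathbf{r}} = cQ\bar{\mathbf{r}} = -\mathbf{v}^*$, so $\tilde{\mathbf{r}}$ is also an optimal dual solution. This choice of $\tilde{\mathbf{r}}$ is what allows the spectral bound to bite.

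For the key estimate, decompose $\tilde{\mathbf{r}}$ in the eigenbasis of $Q$. Because $\tilde{\mathbf{r}}\in\text{range}(Q)$, only eigen-directions corresponding to nonzero eigenvalues of $Q$ contribute, and the smallest nonzero eigenvalue of $Q=(A'D^{-1}A)^{1/2}$ is $\sqrt{\tilde{\lambda}_m}$. Consequently
\[
\|Q\tilde{\mathbf{r}}\|_2^2 \ge \tilde{\lambda}_m \,\|\tilde{\mathbf{r}}\|_2^2.
\]
Combining this with $cQ\tilde{\mathbf{r}} = -\mathbf{v}^*$ gives $c^2\tilde{\lambda}_m\|\tilde{\mathbf{r}}\|_2^2 \le \|\mathbf{v}^*\|_2^2$, and finally the subgradient bound $\|\mathbf{v}^*\|_2 \le U$ (which holds for every element of $\partial F(\mathbf{x}^*)$ by hypothesis) yields $\|\tilde{\mathbf{r}}\|_2^2 \le U^2/(c^2\tilde{\lambda}_m)$.

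The main obstacle I expect is being careful about the projection step: one must verify that projecting $\bar{\mathbf{r}}$ onto $\text{range}(Q)$ preserves both dual feasibility and the optimality/stationarity relation $cQ\bar{\mathbf{r}} = -\mathbf{v}^*$. This is essentially automatic because $Q$ annihilates $\text{null}(Q)$, but it is what lets us invoke the smallest \emph{nonzero} eigenvalue $\tilde{\lambda}_m$ rather than getting $0$ from a null-space component. Everything else (the existence of a dual multiplier, and the subgradient bound $U$) is either assumed or follows from standard convex duality for linearly constrained problems.
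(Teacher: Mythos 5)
Your proof is correct and follows essentially the same route as the paper: both arguments identify an optimal dual $\bar{\mathbf{r}}$ with $cQ\bar{\mathbf{r}} \in -\partial F(\mathbf{x}^*)$ (the paper phrases this via the saddle-point inequality rather than KKT stationarity), replace it by its component in $\text{range}(Q)$ — the paper's explicit SVD construction $\tilde{\mathbf{r}} = \sum_i \frac{c_i}{\sigma_i}\mathbf{u}_i$ is exactly the orthogonal projection you describe — and then invoke the smallest nonzero eigenvalue of $A'D^{-1}A$ to get $\|\tilde{\mathbf{r}}\|_2^2 \le U^2/(c^2\tilde{\lambda}_m)$. The differences (projection language vs.\ explicit singular-vector coefficients, and the immaterial sign convention in the Lagrangian) are purely presentational.
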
\label{lem:boundpstar}


\begin{proof}
There exists an optimal primal-dual solution for problem \eqref{eq:optlinearconstraintwithQ} such that $(\mathbf{x}^*, \hat{\mathbf{r}})$ is a saddle point of the Lagrangian function, i.e., for any ${\bf x}\in \mathbb{R}^n$,
\begin{align}\label{eq:leboundstarprooftemp}
F(\mathbf{x}^*) - F(\mathbf{x}) \le  {c \hat{\mathbf{r}}}' Q (\mathbf{x} - \mathbf{x}^*).
\end{align}
Note that $(\mathbf{x}^*, \hat{\mathbf{r}})$ satisfies saddle point inequality if and only if it satisfies the inequality given in  \eqref{eq:leboundstarprooftemp}. Equation \eqref{eq:leboundstarprooftemp} shows that $c {\hat{\mathbf{r}}}'Q \in \partial F(\mathbf{x}^*)$. Let $c {\hat{\mathbf{r}}}'Q=\mathbf{v}' \in \partial F(\mathbf{x}^*)$.  We will use this $\hat{\mathbf{r}}$ to construct $\tilde{\mathbf{r}}$ such that $c \tilde{\mathbf{r}}'Q = \mathbf{v}'$ and hence we would have 
\begin{align*}
F(\mathbf{x}^*) - F(\mathbf{x}) \le  c {\tilde{\mathbf{r}}}' Q (\mathbf{x} - \mathbf{x}^*),
\end{align*}
meaning $(\mathbf{x}^*,\tilde{\mathbf{r}})$ satisfies the saddle point inequality. This shows that $(\mathbf{x}^*,\tilde{\mathbf{r}})$ is an optimal primal-dual solution (see section 6 of \cite{bertsekas2003convex}). Moreover, we choose $\tilde{\mathbf{r}}$ to satisfy the statement of lemma. 

Let $Q= \sum_{i=1}^r \mathbf{u}_i \sigma_i \mathbf{v}'_i$ be the singular value decomposition of $Q$, where $\text{rank}(Q)=r$.  Since $c {\hat{\mathbf{r}}}'Q=\mathbf{v}'$, $\mathbf{v}$  belongs to the span of $\{c \mathbf{v}_1, \dots, c \mathbf{v}_r\}$ and can be written as $\mathbf{v}= c \sum_{i=1}^r c_i \mathbf{v}_i$ for some coefficients $c_i$'s . Let $\tilde{\mathbf{r}}= \sum_{i=1}^r \frac{c_i}{\sigma_i} \mathbf{u}_i$. By this choice of $\tilde{\mathbf{r}}$   we have $ c\tilde{\mathbf{r}}' Q = c\sum_{i=1}^r c_i \mathbf{v}_i= \mathbf{v}'$. This choice also yields 
 
\begin{align*}
&  ||\tilde{\mathbf{r}}||^2=  \sum_{i=1}^r \frac{c_i^2}{\sigma_i^2} \le  \sum_{i=1}^r \frac{c_i^2}{\tilde{\lambda}_m}=  \frac{1}{\sigma_{\text{min}}^2} \sum_{i=1}^r c_i^2 \\
& =  \frac{1}{c^2 \tilde{\lambda}_m} ||\mathbf{v}||^2 \le  \frac{1}{c^2 \tilde{\lambda}_m} U^2,
\end{align*} 
where we used the bound on the subgradient to obtain the last inequality. Since ${\tilde{\mathbf{r}}}' Q=  \mathbf{v}' \in \partial F(\mathbf{x}^*)$, $(\mathbf{x}^*, \tilde{\mathbf{r}})$ satisfies the saddle point inequality.


\end{proof}
Next, we use this lemma to analyze the network effect. 
Using Theorem \ref{thm:sublinearrate} with zero initial condition, we have 
\begin{align*}
 | F(\hat{\mathbf{x}}(T)) - F(\mathbf{x}^*)| \le & \frac{c}{2T} ||\mathbf{x}^*||_{M- A'D^{-1}A}^2 +  \frac{2}{c T} \frac{U^2}{\tilde{\lambda}_m}, 
\end{align*}
and 
\begin{align*}
   ||Q \hat{\mathbf{x}}(T)|| \le  \frac{1}{2T} ||\mathbf{x}^*||_{M- A' D^{-1} A}^2   + \frac{1}{2T} \left( 2+ 2\frac{U^2}{c^2 \tilde{\lambda}_m} \right).
\end{align*}
Using $||\mathbf{x}^*||_{M- A' D^{-1} A}^2  \le \lambda_M ||\mathbf{x}^*||_2^2$ completes the proof. 

\subsection{\textup{\textbf{Proof of Lemma \autoref{lem:strongconvexandlipschitz}}}}

For any $i$, since $f_i(x)- \frac{\nu_{f_i}}{2} ||x||^2$ is convex, $\nabla ( f_i(x)- \frac{\nu_{f_i}}{2} ||x||^2)$ is monotone and we have 
\begin{align*}
\left< \nabla f_i(x)- \nabla f_i(y), x-y \right> - \nu_{f_i} ||x-y||^2 \ge 0.
\end{align*}
Since $\nu \le \nu_{f_i}$, we obtain
\begin{align*}
\left< \nabla f_i(x)- \nabla f_i(y), x-y \right> - \nu ||x-y||^2 \ge 0,
\end{align*}
which results in 
\begin{align*}
(\nabla F(\mathbf{x})- \nabla F(\mathbf{y}))' (\mathbf{x}- \mathbf{y}) \ge \nu ||\mathbf{x}- \mathbf{y}||_2^2,
\end{align*}
this completes the proof of part (a).  We now prove part (b). For any $x, y \in \mathbb{R}^n$, we have 
\begin{align*}
f_i(y) &= f_i(x) + \int_0^1 \left< {\nabla} f_i (x+ \tau (y -x)), y-x\right> d\tau \\
& = f_i(x) + \left< {\nabla} f_i(x), y-x \right>\\
& + \int_0^1 \left< {\nabla} f_i ((1-\tau)x+ \tau y ) - {\nabla} f_i(x), y-x\right> d\tau\\
& \le f_i(x) + \left< {\nabla} f(x), y-x \right>\\
& + \int_0^1 || {\nabla} f_i ((1-\tau)x+ \tau y)- {\nabla} f_i(x) || || y-x|| d\tau\\
& \le f_i(x) + \left< {\nabla} f_i(x), y-x \right> + L_{f_i} ||y-x||^2 \int_0^1 \tau d\tau\\
& = f_i(x) + \left< {\nabla} f_i(x), y-x \right>+ \frac{L_{f_i}}{2} ||y-x||^2.
\end{align*}
Let $\phi_x(y)= f_i(y)-\left< {\nabla} f_i(x), y\right>$. Note that $\phi_x(y)$ has Lipschitz gradient with parameter $L_{f_i}$. Moreover, we have that $\min_{y} \phi_x(y)= \phi_x(x)$, since 
\begin{align*}
{\nabla} \phi_x(y)= {\nabla} f_i(y)- {\nabla} f_i(x)
\end{align*}
is zero for $y=x$. Therefore, using the previous relation, we have that 
\begin{align*}
& \phi_x(y) - \phi_x(x) = f_i(y)- f_i(x) - \left< \nabla  f_i(x), y-x\right> \\
& \ge \frac{1}{2 L_{f_i}} ||\nabla \phi_x(y)||= \frac{1}{2 L_{f_i}} ||\nabla f_i(y)- \nabla f_i(x)||^2.
\end{align*}
 Using the previous relation , we have 
\begin{align*}
f_i(y)- f_i(x) - \left< \nabla  f_i(x), y-x\right> \ge \frac{1}{2 L_{f_i}} ||\nabla f_i(y)- \nabla f_i(x)||^2.
\end{align*}
We also have 
\begin{align*}
f_i(x)- f_i(y) - \left< \nabla  f_i(y), x-y\right> \ge  \frac{1}{2 L_{f_i}} ||\nabla f_i(y)- \nabla f_i(x)||^2.
\end{align*}
We add the two preceding relations to obtain 
\begin{align*}
\left< {\nabla} f_i(x) - {\nabla} f_i(y), x-y \right> \ge \frac{1}{L_{f_i}} ||{\nabla} f_i(y)- {\nabla} f_i(x)||^2,
\end{align*}
for any $i=1, \dots, n$. Combining this relation for all $i$'s completes the proof. 
\\Finally, we prove part (c). Let $h=(h_1', \dots, h_n')'$. By definition of subgradient, for any $i \in V$ we have 
\begin{align*}
h_i(x_i(t+1))' (x_i- y_i) \ge f_i(x_i)- f_i(y_i) .
\end{align*}
Taking summation of this inequality for all $i=1, \dots, n$ shows that 
\begin{align*}
h(\mathbf{x})' (\mathbf{x}- \mathbf{y}) \ge F(\mathbf{x})- F(\mathbf{y}). 
\end{align*}

\subsection{\textup{\textbf{Proof of  \autoref{thm:linearconvergence}}}}
We first show two lemmas that we use in the proof of this theorem. The first lemma shows that both matrices $M-A'D^{-1}A$ and $A'D^{-1}A$ are positive semidefinite and the second lemma shows a relation between the sequences $\mathbf{q}(t)$, $\mathbf{r}(t)$, and $\mathbf{x}(t)$ same as the one shown in Lemma \autoref{lem:recursion}. 
\begin{lemma}\label{lem:PSD}
The matrices $M-A'D^{-1}A$ and $A'D^{-1}A$ are positive semidefinite.
\end{lemma}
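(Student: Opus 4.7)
Both claims reduce to direct quadratic-form computations. The plan is to verify non-negativity of $x' A' D^{-1} A x$ (trivially, from positive definiteness of $D^{-1}$) and of $x' (M - A' D^{-1} A) x$ (via a row-wise Cauchy--Schwarz estimate matched to the definition of $M$).

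The first claim is immediate: writing $x' A' D^{-1} A x = (Ax)' D^{-1} (Ax)$ and observing that $D$ is diagonal with strictly positive entries $d_i + 1$ makes the right-hand side a non-negative weighted sum of squares.

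For the second claim, the strategy is to upper bound $x' A' D^{-1} A x$ by $x' M x$ coordinate-by-coordinate. Since $A_{ij} = 0$ whenever $j \notin N(i)$, the $i$-th coordinate of $Ax$ is a sum of only $d_i + 1$ terms, and applying Cauchy--Schwarz to the vectors $(A_{ij})_{j \in N(i)}$ and $(x_j)_{j \in N(i)}$ yields
\[
\bigl((Ax)_i\bigr)^2 \le (d_i + 1) \sum_{j \in N(i)} A_{ij}^2 \, x_j^2 .
\]
Dividing by $d_i + 1$ and summing over $i$ produces $x' A' D^{-1} A x \le \sum_i \sum_{j \in N(i)} A_{ij}^2 \, x_j^2$. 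The final step is to interchange the order of summation, using the symmetry of the neighborhood relation ($j \in N(i) \iff i \in N(j)$), which gives $\sum_j x_j^2 \sum_{i \in N(j)} A_{ij}^2 = \sum_j M_{jj} x_j^2 = x' M x$, where the last equality uses the definition $M_{ii} = \sum_{j \in N(i)} A_{ji}^2$ after relabeling dummy indices.

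The only bookkeeping subtlety — rather than a genuine obstacle — is aligning the definition of $M$, which is written in terms of column-wise sums of $A_{ji}^2$, with the row-wise $A_{ij}^2$ terms naturally produced by Cauchy--Schwarz. Because both expressions involve only squared magnitudes and the underlying neighborhood graph is symmetric, the identification goes through cleanly without requiring $A$ itself to be symmetric, so the argument extends verbatim to the general $d \ge 1$ case through the Kronecker structure $A = P \otimes I_d$.
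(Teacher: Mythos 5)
Your proof is correct, but it takes a genuinely different route from the paper's. The paper proves both claims via the Gershgorin circle theorem: it computes the diagonal entries $[M-A'D^{-1}A]_{ii}=\sum_l A_{li}^2\frac{d_l}{d_l+1}$, bounds the off-diagonal row sums using the fact that the rows of $A$ sum to zero (a consequence of Assumption 1, since $\mathrm{null}(P)=\mathrm{span}\{\mathbf{1}\}$ forces $P\mathbf{1}=0$), and concludes that every eigenvalue is at least $\sum_l A_{li}^2\frac{d_l-1}{d_l+1}\ge 0$, which additionally needs $d_l\ge 1$. You instead argue directly on quadratic forms: $x'A'D^{-1}Ax=(Ax)'D^{-1}(Ax)\ge 0$ is immediate, and $x'A'D^{-1}Ax\le x'Mx$ follows from a row-wise Cauchy--Schwarz bound, an interchange of summation using $j\in N(i)\iff i\in N(j)$, and the identification $\sum_{i\in N(j)}A_{ij}^2=M_{jj}$. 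Your route is more elementary and strictly more general: it uses only the sparsity pattern of $A$, so it needs neither the zero-row-sum property from Assumption 1 nor $d_l\ge 1$; the paper's Gershgorin route, in exchange, yields a quantitative per-row eigenvalue lower bound rather than bare nonnegativity, though the lemma never uses more than nonnegativity. One small slip in your prose: the displayed inequality $\bigl((Ax)_i\bigr)^2\le(d_i+1)\sum_{j\in N(i)}A_{ij}^2x_j^2$ is Cauchy--Schwarz applied to the all-ones vector and $(A_{ij}x_j)_{j\in N(i)}$, not to $(A_{ij})_{j\in N(i)}$ and $(x_j)_{j\in N(i)}$ as stated (that pairing gives $\bigl(\sum_j A_{ij}^2\bigr)\bigl(\sum_j x_j^2\bigr)$, which is not the bound you use); the inequality itself is valid and the rest of the argument goes through.
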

\begin{proof}
Both matrices are clearly symmetric. We first show $M- A' D^{-1} A$ is positive semidefinite. By definition, we have 
\begin{align*}
[M- A' D^{-1} A]_{ii}= M_{ii} - \sum_l A_{li} A_{li} \frac{1}{d_l +1}= \sum_l A_{li}^2  \frac{d_l}{d_l +1}.
\end{align*}
We also have 
\begin{align*}
[M- A' D^{-1} A]_{ij}= - \sum_l A_{li} A_{lj} \frac{1}{d_l+1} . 
\end{align*}
Therefore, 
\begin{align*}
& \sum_{j\neq i} \left.|[M- A' D^{-1} A]_{ij} \right.|= \sum_{j \neq i} \left.| \sum_l A_{li} A_{lj} \frac{1}{d_l+1} \right.| \\
& \le  \sum_l \left.| A_{li} \right.| \frac{1}{d_l+1} \left.| \sum_{j \neq i} A_{lj} \right.| = \sum_l A_{li}^2 \frac{1}{d_l+1},
\end{align*}
where we used the fact that $\sum_{j=1}^n A_{lj}=0$, for any $l$. Therefore, by Greshgorin Circle Theorem, for any eigen value $\mu$ of $M- A' D^{-1} A$, for some $i$ we have
\begin{align*}
|\mu- [M- A' D^{-1} A]_{ii} | \le \sum_{j\neq i} |[M- A' D^{-1} A]_{ij}|,
\end{align*}
which leads to 
\begin{align*}
& \mu \ge [M- A' D^{-1} A]_{ii} - \sum_{j\neq i} \left.|[M- A' D^{-1} A]_{ij} \right.| \\
& \ge \sum_l A_{li}^2 \left( \frac{d_l -1}{d_l+1} \right) \ge 0,
\end{align*}
where we used the fact that $d_l \ge 1$ that evidently holds. We next show that $A'D^{-1}A$ is positive semidefinite. We have 
\begin{align*}
[A'D^{-1}A]_{ii} = \sum_l A_{li}^2 \frac{1}{d_l +1}
\end{align*} 
We also have 
\begin{align*}
& \sum_{j\neq i} \left.| [A' D^{-1} A]_{ij} \right.|= \sum_{j \neq i} \left.| \sum_l A_{li} A_{lj} \frac{1}{d_l+1} \right.| \\
& \le \sum_l \left.| A_{li} \right.| \frac{1}{d_l+1} \left.| \sum_{j \neq i} A_{lj} \right.| = \sum_l A_{li}^2 \frac{1}{d_l+1}.
\end{align*}
Since $[A'D^{-1}A]_{ii} \ge \sum_{j\neq i} \left.| [A' D^{-1} A]_{ij} \right.|$, similarly, by Greshgorin Circle Theorem, the matrix $A' D^{-1} A$ is positive semidefinite. 
\end{proof}
\begin{lemma}\label{lem:recursion}
 Suppose Assumptions \ref{assump:matrixA} and   \ref{assump:nonempty-set} hold. For differentiable functions, the sequence $\{\mathbf{x}(t), \mathbf{r}(t)\}_{t=0}^{\infty}$ satisfies 
\begin{align*}
& (M- A' D^{-1} A) (\mathbf{x}(t+1)- \mathbf{x}(t)) \nonumber \\
& = -Q (\mathbf{r}(t+1)- \mathbf{r}^*)- \frac{1}{c} (\nabla F(\mathbf{x}(t+1))- \nabla F(\mathbf{x}^*)),
\end{align*}
for some $\mathbf{r}^*$ that satisfies $Q \mathbf{r}^* + \frac{1}{c} \nabla F(\mathbf{x}^*)=0$. Moreover, $\mathbf{r}^*$ belongs to the column span of $Q$. 
\end{lemma}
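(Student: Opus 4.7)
The plan is to bootstrap from the (first) Lemma \ref{lem:recursion} already proved in the appendix, which for differentiable $F$ specializes to
\[
(M - A'D^{-1}A)(\mathbf{x}(t+1) - \mathbf{x}(t)) = -Q\mathbf{r}(t+1) - \tfrac{1}{c}\nabla F(\mathbf{x}(t+1)),
\]
since the subgradient $h(\mathbf{x}(t+1))$ coincides with $\nabla F(\mathbf{x}(t+1))$ when $F$ is differentiable. The only new content is producing a specific vector $\mathbf{r}^*$ that satisfies $Q\mathbf{r}^* + \tfrac{1}{c}\nabla F(\mathbf{x}^*) = 0$ together with $\mathbf{r}^* \in \text{range}(Q)$; once this identity is in hand, subtracting it from the displayed recursion yields the claim in a single line.

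To produce such an $\mathbf{r}^*$, I would look at the KKT conditions for the reformulation $\min_{Q\mathbf{x}=0} F(\mathbf{x})$. This problem has the same optimal primal set as \eqref{eq:optlinearconstraint} because $\text{null}(Q) = \text{null}(A'D^{-1}A) = \text{null}(A) = \text{span}\{\mathbf{1}\}$, where the middle equality uses that $D^{-1}$ is positive definite and the last is Assumption \ref{assump:matrixA}. Since the constraints are affine and the optimal set is nonempty (Assumption \ref{assump:nonempty-set}), strong duality gives the existence of a Lagrange multiplier $c\tilde{\mathbf{r}}$ with $\nabla F(\mathbf{x}^*) + cQ'\tilde{\mathbf{r}} = 0$. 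Recalling that $Q = V\Sigma^{1/2}V'$ is symmetric, so $Q' = Q$, this becomes
\[
Q\tilde{\mathbf{r}} + \tfrac{1}{c}\nabla F(\mathbf{x}^*) = 0.
\]

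To upgrade $\tilde{\mathbf{r}}$ to a vector in $\text{range}(Q)$, I would orthogonally decompose $\tilde{\mathbf{r}} = \mathbf{r}^* + \mathbf{r}_0$ with $\mathbf{r}^* \in \text{range}(Q)$ and $\mathbf{r}_0 \in \text{null}(Q)$; the decomposition is orthogonal because $Q$ is symmetric, so $\text{range}(Q) \perp \text{null}(Q)$. Then $Q\tilde{\mathbf{r}} = Q\mathbf{r}^*$, so $\mathbf{r}^*$ also satisfies the optimality relation and by construction lies in $\text{range}(Q)$. Subtracting the identity $Q\mathbf{r}^* + \tfrac{1}{c}\nabla F(\mathbf{x}^*) = 0$ from the Lemma \ref{lem:recursion} recursion and grouping terms yields
\[
(M - A'D^{-1}A)(\mathbf{x}(t+1) - \mathbf{x}(t)) = -Q(\mathbf{r}(t+1) - \mathbf{r}^*) - \tfrac{1}{c}\bigl(\nabla F(\mathbf{x}(t+1)) - \nabla F(\mathbf{x}^*)\bigr),
\]
as desired. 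The only delicate point is justifying the existence of the dual multiplier at $\mathbf{x}^*$, which is standard strong duality for problems with affine equality constraints and a nonempty optimal set; the remainder is algebra and exploiting symmetry of $Q$.
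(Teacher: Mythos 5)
Your proof is correct, and its skeleton matches the paper's: derive the recursion $(M-A'D^{-1}A)(\mathbf{x}(t+1)-\mathbf{x}(t)) = -Q\mathbf{r}(t+1)-\tfrac{1}{c}\nabla F(\mathbf{x}(t+1))$ from the perturbed linear update, produce an $\mathbf{r}^*$ in the range of $Q$ with $Q\mathbf{r}^*+\tfrac{1}{c}\nabla F(\mathbf{x}^*)=0$, and subtract. The one substantive difference is how you establish existence of $\mathbf{r}^*$. You invoke strong duality / KKT for the affine-constrained convex problem $\min_{Q\mathbf{x}=0}F(\mathbf{x})$ (valid here, since $F$ is finite and differentiable on all of $\mathbb{R}^{nd}$ and the optimum is attained, so a multiplier exists), then orthogonally project the multiplier onto $\mathrm{range}(Q)$. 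The paper instead proves the same fact from first principles: it notes $\mathbb{R}^n = \mathrm{span}(Q)\oplus\mathrm{null}(Q)$ with $\mathrm{null}(Q)=\mathrm{span}\{\mathbf{1}\}$ (Assumption \ref{assump:matrixA}), observes that $\mathbf{1}'\nabla F(\mathbf{x}^*)=0$ by optimality of the consensus point, concludes $\nabla F(\mathbf{x}^*)\in\mathrm{span}(Q)$, and then applies the same projection step. These are two justifications of the identical fact --- the KKT stationarity condition for minimizing a convex function over the subspace $\mathrm{null}(Q)$ is precisely $\nabla F(\mathbf{x}^*)\perp\mathrm{null}(Q)$, i.e., $\nabla F(\mathbf{x}^*)\in\mathrm{range}(Q')=\mathrm{range}(Q)$. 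Your route is shorter and delegates to a standard theorem; the paper's route is self-contained and makes explicit exactly where the network assumption ($\mathrm{null}(P)=\mathrm{span}\{\mathbf{1}\}$) and the consensus optimality condition $\sum_i \nabla f_i(x^*)=0$ enter the argument. Either is acceptable; if you cite strong duality you should state the precise form you are using (convex objective, affine equality constraints, attained finite optimum, full-domain objective), since that is the only nontrivial ingredient you did not prove.
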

\begin{proof}
Using Lemma \ref{lem:purturbed} for differentiable functions we have 
\begin{align*}
& M(\mathbf{x}(t+1)- \mathbf{x}(t)) = - \frac{1}{c} \nabla F(\mathbf{x}(t+1)) \\
& - (A' D^{-1} A) \mathbf{x}(t) - (A' D^{-1} A) \sum_{s=0}^{t} \mathbf{x}(s).
\end{align*}
We subtract $(A' D^{-1} A) \mathbf{x}(t+1)$ from both sides and rearrange the terms to obtain 
\begin{align*}
& (M- A' D^{-1} A) (\mathbf{x}(t+1)- \mathbf{x}(t))\\
& = - A' D^{-1} A \sum_{s=0}^{t+1} \mathbf{x}(s)- \frac{1}{c} \nabla F(\mathbf{x}(t+1)).
\end{align*}
Using $QQ=A'D^{-1} A$, yields 
\begin{align*}
& (M- A' D^{-1} A) (\mathbf{x}(t+1)- \mathbf{x}(t))\\
& = - Q \mathbf{r}(t+1) - \frac{1}{c} \nabla F(\mathbf{x}(t+1)).
\end{align*}
 We next show there exist $\mathbf{r}^*$ such that 
$ Q \mathbf{r}^* + \frac{1}{c} \nabla F(\mathbf{x}^*)=0$.
First note that both column space (range) and null space of $Q$ and $A' D^{-1} A$ are the same. Since $\text{span}(Q)\oplus \text{null}(Q)= \mathbb{R}^n$, we have $\nabla F(\mathbf{x}^*) \in  \text{span}(Q)\oplus \text{null}(Q)= \text{span}(Q)\oplus \text{span}\{\mathbf{1}\}$ as $\text{null}(Q)=\text{span}\{\mathbf{1}\}$. Since $\mathbf{1}' \nabla F(\mathbf{x}^*)=0$, we can write $\nabla F(\mathbf{x}^*)$ as a linear combination of column vectors of $Q$. Therefore, there exist $\mathbf{r}$ such that $\frac{1}{c} \nabla F(\mathbf{x}^*)= - Q \mathbf{r}$. Let $\mathbf{r}^*= \text{Proj}_Q \mathbf{r}$ to obtain $Q \mathbf{r}= Q \mathbf{r}^*$ where $\mathbf{r}^*$ lies in the column space of $Q$. Part (b) simply follows from the same lines of argument. 
\end{proof}
\textbf{Back to the proof of \autoref{thm:linearconvergence}:} Note that since $M- A' D^{-1} A$ is positive semidefinite, 
\begin{align*}
& \left\langle . , . \right\rangle : \mathbb{R}^{2n} \times \mathbb{R}^{2n} \mapsto \mathbb{R} \\
& \left\langle \mathbf{q}_1, \mathbf{q}_2 \right\rangle= \mathbf{q}'_1 G \mathbf{q}_2,
\end{align*} 
where \[G= \begin{pmatrix}
  I & 0  \\
  0 & M- A'D^{-1}A 
 \end{pmatrix}\] is a semi-inner product.\footnote{This means it satisfies conjugate symmetry, linearity and semipositive-definiteness (instead of positive-definiteness).}  We first show that for a $\delta$ given by the statement of theorem, we have 
\begin{align}\label{eq:pftemp0}
||\mathbf{q}(t+1)- \mathbf{q}^*||_G^2 \le \left( \frac{1}{1+ \delta} \right) ||\mathbf{q}(t)- \mathbf{q}^*||_G^2. 
\end{align}
Using Lemma \eqref{lem:strongconvexandlipschitz} and Lemma \eqref{lem:recursion}, we have  
\begin{align}\label{eq:pftemp1}
& \frac{2}{c}\nu ||\mathbf{x}(t+1)- \mathbf{x}^*||_2^2 \nonumber \\
& \le \frac{2}{c} (\mathbf{x}(t+1)- \mathbf{x}^*)' (\nabla F(\mathbf{x}(t+1))- \nabla F(\mathbf{x}^*)) \nonumber \\
& = 2 (\mathbf{x}(t+1)- \mathbf{x}^*)' (Q (\mathbf{r}^*- \mathbf{r}(t+1))) \nonumber \\
& + 2 (\mathbf{x}(t+1)- \mathbf{x}^*)' (M- A' D^{-1} A) (\mathbf{x}(t)- \mathbf{x}(t+1)) \nonumber\\
& = 2 (\mathbf{r}(t+1)- \mathbf{r}(t))' (\mathbf{r}^*- \mathbf{r}(t+1)) \nonumber \\
& + 2 (\mathbf{x}(t+1)- \mathbf{x}(t))' (M- A' D^{-1} A) (\mathbf{x}^*- \mathbf{x}(t+1))  \nonumber\\
&=  2 (\mathbf{q}(t+1)- \mathbf{q}(t))' G (\mathbf{q}^*- \mathbf{r}(t+1))   \nonumber\\
& =  ||\mathbf{q}(t)- \mathbf{q}^*||_G^2 - ||\mathbf{q}(t+1)- \mathbf{q}^*||_G^2 - ||\mathbf{q}(t)- \mathbf{q}(t+1)||_G^2 .
\end{align}
Again, using Lemma \eqref{lem:strongconvexandlipschitz} and Lemma \eqref{lem:recursion}, we have 
\begin{align}\label{eq:pftemp1.5}
& \frac{2}{c}\frac{1}{L} ||\nabla F(\mathbf{x}(t+1))- \nabla F (\mathbf{x}^*)||_2^2 \nonumber \\
& \le  ||\mathbf{q}(t)- \mathbf{q}^*||_G^2 - ||\mathbf{q}(t+1)- \mathbf{q}^*||_G^2 - ||\mathbf{q}(t)- \mathbf{q}(t+1)||_G^2 .
\end{align}
Using \eqref{eq:pftemp1} and \eqref{eq:pftemp1.5}, for any $\beta \in (0,1)$, we have 
\begin{align}\label{eq:pftemp1.6}
& \beta \frac{2}{c}\nu ||\mathbf{x}(t+1)- \mathbf{x}^*||_2^2  \nonumber \\
& + (1- \beta) \frac{2}{c}\frac{1}{L} ||\nabla F(\mathbf{x}(t+1))- \nabla F (\mathbf{x}^*)||_2^2  \\
\le &   ||\mathbf{q}(t)- \mathbf{q}^*||_G^2 - ||\mathbf{q}(t+1)- \mathbf{q}^*||_G^2 - ||\mathbf{q}(t)- \mathbf{q}(t+1)||_G^2 .
\end{align}
This yields to 
\begin{align}\label{eq:pftemp2}
& ||\mathbf{q}(t)- \mathbf{q}^*||_G^2 - ||\mathbf{q}(t+1)- \mathbf{q}^*||_G^2 \nonumber \\
& \ge ||\mathbf{q}(t)- \mathbf{q}(t+1)||_G^2 + \beta \frac{2}{c}\nu ||\mathbf{x}(t+1)- \mathbf{x}^*||_2^2  \nonumber \\
& + (1- \beta) \frac{2}{c}\frac{1}{L} ||\nabla F(\mathbf{x}(t+1))- \nabla F (\mathbf{x}^*)||_2^2 . 
\end{align}
Comparing this relation with \eqref{eq:pftemp0}, it remains to show  
\begin{align*}
& ||\mathbf{q}(t)- \mathbf{q}(t+1)||_G^2 + \beta \frac{2}{c}\nu ||\mathbf{x}(t+1)- \mathbf{x}^*||_2^2 \\
&  + (1- \beta) \frac{2}{c}\frac{1}{L} ||\nabla F(\mathbf{x}(t+1))- \nabla F (\mathbf{x}^*)||_2^2 \\
& \ge \delta ||\mathbf{q}(t+1)- \mathbf{q}^*||_G^2,
\end{align*}
which is equivalent to 
\begin{align}
& ||\mathbf{q}(t)- \mathbf{q}(t+1)||_{G}^2 + ||\mathbf{x}(t+1)- \mathbf{x}^*||_{\frac{2 \nu t}{c}I - \delta (M- A' D^{-1} A)}^2 \nonumber \\
& + (1- \beta) \frac{2}{c}\frac{1}{L} ||\nabla F(\mathbf{x}(t+1))- \nabla F (\mathbf{x}^*)||_2^2 \nonumber\\
& \ge \delta ||\mathbf{r}(t+1)- \mathbf{r}^*||_2^2.
\end{align}
Using Lemma \eqref{lem:PSD}, in order to show this inequality it suffices to show 
 \begin{align}\label{eq:pftemp2.2}
&  ||\mathbf{x}(t+1)- \mathbf{x}^*||_{\frac{2 \nu t}{c}I - \delta (M- A' D^{-1} A)}^2  \nonumber \\
& + (1- \beta) \frac{2}{c}\frac{1}{L} ||\nabla F(\mathbf{x}(t+1))- \nabla F (\mathbf{x}^*)||_2^2 \nonumber \\
& \ge \delta ||\mathbf{r}(t+1)- \mathbf{r}^*||_2^2.
\end{align}
Since both $\mathbf{r}(t+1)$ and $\mathbf{r}^*$ are orthogonal to $\mathbf{1}$ and $\text{null}(Q)= \text{span}(\{\mathbf{1}\})$, using Lemma \eqref{lem:recursion}, we obtain
\begin{align}\label{eq:temppf2.45}
& \delta ||(\mathbf{r}(t+1)- \mathbf{r}^*)||_2^2  \le \frac{\delta}{ \tilde{\lambda}_m} ||Q (\mathbf{r}(t+1)- \mathbf{r}^*)||_2^2 \nonumber \\
& \le \frac{\delta}{ \tilde{\lambda}_m} ||(M - A' D^{-1} A)(\mathbf{x}(t+1)- \mathbf{x}^*) \nonumber\\
& - \frac{1}{c} (\nabla F(\mathbf{x}(t+1))- \nabla F (\mathbf{x}^*))||_2^2 \nonumber \\
& \le  \frac{2 \delta}{ \tilde{\lambda}_m} ||(M - A' D^{-1} A)(\mathbf{x}(t+1)- \mathbf{x}^*)||_2^2  \nonumber\\
& + \frac{2 \delta}{ \tilde{\lambda}_m} || (\nabla F(\mathbf{x}(t+1))- \nabla F (\mathbf{x}^*))||_2^2  \nonumber \\
& \le  \frac{2 \delta \lambda_M}{ \tilde{\lambda}_m} ||\mathbf{x}(t+1)- \mathbf{x}^*||_{M- A'D^{-1} A}^2  \nonumber\\
& + \frac{2 \delta}{ \tilde{\lambda}_m} || (\nabla F(\mathbf{x}(t+1))- \nabla F (\mathbf{x}^*))||_2^2
\end{align}
Comparing \eqref{eq:temppf2.45} and \eqref{eq:pftemp2.2}, it suffices to have 
\begin{align*}
\delta \le \min \left \{ \frac{2 \beta \nu }{c \lambda_M (1+ \frac{2}{\tilde{\lambda}_m})}, \frac{(1-\beta) c \tilde{\lambda}_m }{L} \right \}. 
\end{align*}
This shows that \eqref{eq:pftemp0} holds. Using \eqref{eq:pftemp0} along with \eqref{eq:pftemp1} completes the proof. 
\subsection{\textup{\textbf{Proof of  \autoref{thm:optimalstepsize}}}}

The largest possible $\delta$ that satisfies the constraint given in Theorem 1 by maximizing over $\beta \in (0,1)$ is the solution of 
\begin{align}\label{eq:cortemp1}
\frac{2 \beta \nu }{c \lambda_M (1+ \frac{2}{\tilde{\lambda}_m})}= \frac{(1- \beta) c \tilde{\lambda}_m }{L},
\end{align}
which is $\beta^*= \frac{c^2 \lambda_M (2+ \tilde{\lambda}_m)}{2 \nu L + c^2 \lambda_M (2+ \tilde{\lambda}_m)}$. This in turn shows that the maximum $\delta$ is equal to $\delta= \frac{2\beta^* \nu }{c \lambda_M (1+ \frac{2}{\tilde{\lambda}_m})}$. We now maximize $\delta$ over choices of $c$, leading to 
\begin{align*}
\delta^*= \frac{1}{2} \sqrt{\frac{2 \tilde{\lambda}_m^2}{ \lambda_M (2 + \tilde{\lambda_m})}\frac{1}{\kappa_f}}. 
\end{align*}

\subsection{\textup{\textbf{Proof of Proposition \autoref{pro:sublinearexplicitnetworkdep}}}}

The bound provided in \autoref{thm:sublinearnetworkdependence} depends on $\tilde{\lambda}_m$. We have that 
\begin{align*}
\tilde{\lambda}_m \ge \frac{1}{d_{\text{max}}+1} a(G)^2,
\end{align*}
where $a(G)$ is the algebraic connectivity of the graph which is the smallest non-zero eigenvalue of the Laplacian matrix. Moreover, we have that 
\begin{align*}
||M- A'D^{-1} A||_2 \le d_{\text{max}} (d_{\text{max}+1})+ \frac{4d_{\text{max}}^2}{d_{\text{min}}+1}.
\end{align*}
Plugging these two bounds in \autoref{thm:sublinearnetworkdependence} an using the fact that $d_{\text{min}} \ge 1$ completes the proof. 
\subsection{\textup{\textbf{Proof of Proposition \autoref{pro:linearexplicitnetworkdep}}}}
Using \autoref{thm:optimalstepsize}, for large enough $\kappa_f$(small enough $\delta^*$) we have $\frac{1}{\log(1+ \delta^*)} \ge \frac{1}{\delta^*}$, in order to have $||\mathbf{x}(t)- \mathbf{x}^*||_2 \le \epsilon$, we need to have 
\begin{align}\label{eq:prooflinearnetworkdep}
& t  \ge \frac{1}{\log \frac{1}{\rho}} \left( 2 \log (\frac{1}{\epsilon}) - \log (\frac{c^*}{2 \nu} ||\mathbf{q}(0)- \mathbf{q}^* ||_G^2) \right)  \nonumber \\
& \ge \sqrt{\kappa_f}\frac{2\sqrt{\lambda_M(2+ \tilde{\lambda}_m)}}{\sqrt{2} \tilde{\lambda}_m} \left( 2 \log (\frac{1}{\epsilon}) - \log (\frac{c^*}{2 \nu} ||\mathbf{q}(0)- \mathbf{q}^* ||_G^2) \right).
\end{align}
This shows that $O\left( \sqrt{\kappa_f} \frac{\sqrt{\lambda_M(2+ \tilde{\lambda}_m)}}{ \tilde{\lambda}_m} \log (\frac{1}{\epsilon}) \right)$ iterations suffice to have $||\mathbf{x}(t)- \mathbf{x}^*||_2 \le \epsilon$. This bound depends on $\tilde{\lambda}_m $ and $\lambda_M$. We have the following bounds 
\begin{align*}
\frac{1}{d_{\text{min}}+1} a(G)^2  \ge \tilde{\lambda}_m \ge \frac{1}{d_{\text{max}}+1} a(G)^2. 
\end{align*}
and 
\begin{align*}
\lambda_M \le  d_{\text{max}} (d_{\text{max}+1})+ \frac{\lambda_{\text{max}}(A)^2}{d_{\text{min}}+1}.
\end{align*}
Using these two bounds along with $\lambda_{\text{max}}(A) \le 2 d_{\text{max}}$, we obtain 
\begin{align*}
\frac{\lambda_M (2+ \tilde{\lambda}_m)}{\tilde{\lambda}_m^2} \le 16 \frac{d^4_{\text{max}}}{d_{\text{min}} a^2(G)}. 
\end{align*}
Plugging this bound into \eqref{eq:prooflinearnetworkdep} completes the proof. 

\bibliographystyle{./biblio/IEEEtran}
\bibliography{./biblio/IEEEabrv,references}
\end{document}